\newtheorem {theorem} {Theorem}
\newtheorem {proposition} [theorem]{Proposition}
\newtheorem {corollary} [theorem]{Corollary}
\newtheorem {lemma}  [theorem]{Lemma}
\newtheorem {remark} [theorem]{Remark}
\newcommand{\R}{\mathbb{R}}
\newcommand{\C}{\mathbb{C}}
\newcommand{\Q}{\mathbb{Q}}
\newcommand{\N}{\mathbb{N}}
\newcommand{\Z}{\mathbb{Z}}
\begin{document}

\title[Meromorphic first integrals of diffeomorphisms]
{Meromorphic first integrals of analytic diffeomorphisms}

\author[A. Ferragut, A. Gasull,  X. Zhang]
{Antoni Ferragut$^1$, Armengol Gasull$^2$,  Xiang Zhang$^3$}

\address{$^1$ Universidad Internacional de la Rioja, Avenida de la Paz 137, 26006 Logro\~{n}o, Spain}
\email{toni.ferragut@unir.net}

\address{$^2$
Departament de Matem\`{a}tiques, Universitat Aut\`{o}\-no\-ma de Barcelona
and Centre de Recerca Matem\`{a}tica, 08193 Bellaterra, Barcelona,
Spain} \email{gasull@mat.uab.cat}

\address{$^3$ School of Mathematical Sciences,  and MOE--LSC, Shanghai Jiao Tong
University, Shanghai, 200240, P. R. China}
\email{xzhang@sjtu.edu.cn}

\subjclass[2010]{Primary: 37C05. Secondary: 37C25; 37C80}

\keywords{Discrete dynamical system, Integrability, Meromorphic
first integrals}

\date{}

\begin{abstract}
We give an upper bound for the number of functionally independent
meromorphic first integrals that a discrete dynamical system
generated by an analytic map $f$ can have in a neighborhood of one
of its fixed points. This bound is obtained in terms of the resonances
 among the eigenvalues of the differential of $f$ at this  point. Our
 approach is inspired on similar Poincar\'{e} type results for ordinary
differential equations. We also apply our results to several
examples, some of them motivated by the study of several difference
equations.
\end{abstract}

\maketitle

\section{Introduction and statement of the main results}

One of the first steps to study the dynamics of a discrete dynamical
system (DDS) is to determine the number $m$ of functionally
independent  first integrals that it has.  It is clear that each new
first integral reduces the region where any orbit can lie, so the
bigger is $m,$ the simpler will be the dynamics. For instance, if
this DDS is $n$-dimensional, this number $m$ is at most $n,$ and in
the case $m=n$ the DDS is called integrable and it has extremely simple dynamics: in most cases
it is globally periodic, that is, there exists  $p\in\N,$ such that
$f^p=\operatorname{Id},$ where $f$ is the invertible map that
generates it, see \cite{CGM}. Similarly, DDS having $m=n-1$ are such
that all their orbits lie in one-dimensional manifolds, see some
examples in \cite{CGM2}.

The aim of this paper is to give an upper bound of the number of
meromorphic first integrals that a DDS generated by an invertible
analytic map can have in a neighborhood of a fixed point. We follow
the approach of Poincar\'{e} for studying the same problem for
continuous dynamical systems given by analytic ordinary differential
equations. It is based on the study of the resonances among the
eigenvalues of the differential of the vector field at one of its
critical points, see for instance \cite{CLZ11} and their references.
We will use similar tools that the ones introduced in that paper.

\smallskip

Consider analytic diffeomorphisms in $(\mathbb C^n,0),$ a
neighborhood of the origin,
\begin{equation}\label{e1.1}
y=f(x),\qquad x\in (\mathbb C^n,0),
\end{equation}
with $f(0)=0$. A function $R(x)=G(x)/H(x)$ with $G$ and $H$ analytic
functions in $(\mathbb C^n,0)$ is a {\it meromorphic first integral}
of the diffeomorphism \eqref{e1.1} if
\[
G(f(x))H(x)=G(x)H(f(x)), \qquad \mbox{ for all }  x\in (\mathbb
C^n,0).
\]
Notice that the above condition implies that
\[
R(f(x))=R(x),
\]
for all $x\in (\mathbb C^n,0)$ where both functions are well
defined. Specially if $G$ and $H$ are polynomial functions, then
$R(x)$ is a {\it rational first integral} of \eqref{e1.1}. If $H$ is
a non--zero constant, then $R(x)$ is  an {\it analytic first
integral} of \eqref{e1.1}. So meromorphic first integrals include
rational and analytic first integrals as particular cases.

\smallskip

Denote by $A={\rm D} f (0)$ the Jacobian matrix of $f(x)$ at $x=0$.
Let $\mu=(\mu_1,\ldots,\mu_n)$ be the $n$--tuple of eigenvalues of
$A$. Notice that since $f$ is a diffeomorphism at $0,$ we have
$\mu_1\mu_2\cdots\mu_n\ne0.$

We say that the eigenvalues $\mu$ satisfy a {\it resonant condition}
if
\[
\mu^{\mathbf k}=1, \qquad\mbox{ for some } \mathbf  k\in \mathbb
Z^n\quad \mbox{with}\quad \|\mathbf k\|\ne0,
\]
where $\mathbb Z$ is the set of integers, and $\|\mathbf
k\|=|k_1|+\dots+|k_n|$.

\smallskip

The aim of this paper is to prove the next result and give some
applications.

\smallskip

\begin{theorem}\label{t1}
Assume that the analytic diffeomorphism \eqref{e1.1} satisfies
$f(0)=0$ and let $\mu=(\mu_1,\ldots,\mu_n)$ be the eigenvalues of
${\rm D} f (0)$. Then the number of functionally independent
generalized rational first integrals of the analytic diffeomorphism
\eqref{e1.1} in $(\mathbb C^n,0)$ is at most the dimension of the
$\mathbb Z$--linear space generated from $\{\mathbf k\in\mathbb
Z^n:\, \mu^{\mathbf k}=1\}$.
\end{theorem}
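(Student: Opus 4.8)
The plan is to compare a meromorphic first integral of \eqref{e1.1} with the linear part $A=\mathrm{D}f(0)$ through its lowest order homogeneous term, and then to read off the bound as a transcendence degree. After a linear change of coordinates I would take $A$ upper triangular with diagonal $\mu=(\mu_1,\dots,\mu_n)$. Set $\Lambda=\{\mathbf k\in\Z^n:\ \mu^{\mathbf k}=1\}$, which is a subgroup of $\Z^n$; its rank equals the dimension of the $\Z$--linear space appearing in the statement, and I will call it $d$. Choosing a basis $\mathbf k^{(1)},\dots,\mathbf k^{(d)}$ of $\Lambda$, the Laurent monomials $u_i=x^{\mathbf k^{(i)}}$ are algebraically independent, so the field $K=\C(u_1,\dots,u_d)$ of rational functions of the resonant monomials has transcendence degree exactly $d$. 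Since the meromorphic first integrals of \eqref{e1.1} form a field $\mathcal F$ and the maximal number of functionally independent ones is $\operatorname{trdeg}_{\C}\mathcal F$, the theorem is equivalent to the inequality $\operatorname{trdeg}_{\C}\mathcal F\le d$.

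The core of the argument is a leading--term lemma: the lowest order part of any meromorphic first integral is a rational first integral of the linear map $x\mapsto Ax$, and every such rational first integral lies in $K$. For the first claim, write $R=G/H$ and let $G_a$, $H_b$ be the nonzero homogeneous parts of lowest degree of $G$ and $H$; then the lowest order part of $R$ is the homogeneous rational function $P=G_a/H_b$. Substituting $f(x)=Ax+O(\|x\|^2)$ into $R(f(x))=R(x)$ and keeping the terms of lowest homogeneous degree, the higher order part of $f$ contributes only strictly higher degree corrections, so $P(Ax)=P(x)$. For the second claim I would use that, to leading order, $A$ is the scaling $x\mapsto(\mu_1x_1,\dots,\mu_nx_n)$: assuming $G_a,H_b$ coprime, $P(Ax)=P(x)$ forces $G_a(Ax)=c\,G_a(x)$ and $H_b(Ax)=c\,H_b(x)$ for a common constant $c$, so every monomial $x^{\mathbf p}$ occurring in $G_a$ or in $H_b$ satisfies $\mu^{\mathbf p}=c$. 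Hence all exponents appearing in $P$ differ by elements of $\Lambda$, and therefore $P\in K$.

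To conclude I would promote the lowest--degree function to a valuation. The homogeneous degree $v(R)$ of the leading part defines a rank one valuation on the field generated by any finite set of first integrals, whose residue field embeds, via $R\mapsto P$, into the degree zero part $K_0$ of $K$, and whose value group embeds into that of $(K,\deg)$. Abhyankar's inequality applied to $K$ gives $\operatorname{trdeg}_{\C}K_0+\operatorname{rank}\Gamma_K\le\operatorname{trdeg}_{\C}K=d$, and applied to the field of first integrals it gives $\operatorname{trdeg}_{\C}\mathcal F\le\operatorname{trdeg}_{\C}K_0+\operatorname{rank}\Gamma_K\le d$, as wanted. A more hands--on alternative avoids valuation theory: if there were $d+1$ functionally independent first integrals $R_1,\dots,R_{d+1}$, their leading parts would satisfy an isobaric polynomial relation $F$ in $K$, and then $F(R_1,\dots,R_{d+1})$ would be a first integral of strictly larger $v$--degree, which one attempts to push to a contradiction by descent on the degree.

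I expect two points to carry the real difficulty. The first is precisely the independence step: the leading--term map is not injective, so one must rule out that several functionally independent first integrals collapse to algebraically dependent leading parts, that is, one must make the descent terminate (or, equivalently, justify the valuation--theoretic packaging above). The second is the non--diagonalizable case: when $A$ is only triangular one must replace the exact scaling by the semisimple part of $A$, order the monomials so that the nilpotent part produces only strictly higher order terms, and verify that the leading part is still fixed by the semisimple action and hence supported on $\Lambda$.
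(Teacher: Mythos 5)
Your leading--term analysis is essentially the paper's own Lemma \ref{l3.3}: the coprimality/UFD step giving $G_a(Ax)=c\,G_a(x)$ and $H_b(Ax)=c\,H_b(x)$ is exactly theirs, and the triangular case you flag as your second difficulty is resolved in the paper by splitting $\mathcal H_n^p$ into the two $\mathcal{L}_c$--invariant subspaces on which $\mu^{\mathbf p}-c$ vanishes or not (Lemma \ref{l3.4}, using the lower--triangular Jordan form), so that part of your sketch is fixable along the lines you indicate. The genuine gap is your concluding step. Abhyankar's inequality is misapplied: it states that residue transcendence degree plus rational rank of the value group is \emph{at most} the transcendence degree of the valued field, so applied to the field $\mathcal F$ of first integrals it yields the lower bound $\operatorname{trdeg}_\C(\text{residue field})+\operatorname{rank}\Gamma_{\mathcal F}\le\operatorname{trdeg}_\C\mathcal F$, not the upper bound $\operatorname{trdeg}_\C\mathcal F\le\operatorname{trdeg}_\C K_0+\operatorname{rank}\Gamma_K$ that you need. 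The reverse inequality is false for subfields of meromorphic germs with the degree valuation: $\C(x,e^x)$, inside the germs of one variable, has transcendence degree $2$, while its residue field is $\C$ and its value group has rank $1$. The same example shows your opening reduction is also wrong: the maximal number of functionally independent meromorphic functions is not $\operatorname{trdeg}_\C\mathcal F$ in general ($x$ and $e^x$ are algebraically independent but functionally dependent); only ``$\le$'' holds. This is symptomatic: any purely field--theoretic bookkeeping discards the Jacobian--rank hypothesis, which is precisely what the correct argument must use.

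Your ``hands--on alternative'' --- recombining first integrals so that the leading parts become independent, by descent on the degree --- is exactly the missing ingredient, and it is a genuine lemma rather than a routine verification: it is Ziglin's lemma (1983), quoted by the paper as Lemma \ref{l3.2}. It asserts that for functionally independent meromorphic germs $R_1,\dots,R_m$ there are polynomials $P_i$ such that $\widetilde R_i=P_i(R_1,\dots,R_i)$ are functionally independent with \emph{functionally independent} lowest--degree parts; its proof uses functional (Jacobian) independence in an essential way, consistent with the failure of the valuation route, which only sees algebraic independence. Since polynomial combinations of first integrals are again first integrals, this recombination stays inside $\mathcal F$, and then your final count ($m$ independent rational functions inside the field $K$ generated by the resonant monomials, with $\operatorname{trdeg}_\C K=d$) closes the proof exactly as the paper does. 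So you correctly isolated where the difficulty sits, but of the two routes you offer to cross it, one is invalid (Abhyankar points the wrong way) and the other is left unproved (the descent \emph{is} Ziglin's lemma); as submitted, the proof has a hole at its crux.
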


\smallskip

The above theorem also extends  some results proved for $n=2$ in
\cite{CGM3}.

\smallskip

In section \ref{s3} we prove Theorem \ref{t1}. Section \ref{sapp} is
devoted to applications of Theorem \ref{t1}.

\section{Proof of Theorem \ref{t1}} \label{s3}

For an analytic or a polynomial function $R(x)$ in $(\mathbb
C^n,0)$, we denote by $R^0(x)$ its homogeneous term of the lowest
degree. For a rational or a meromorphic function $R(x)=G(x)/H(x)$ in
$(\mathbb C^n,0)$, we denote by $R^0(x)$ the rational function
$G^0(x)/H^0(x)$. We expand the analytic functions $G(x)$ and $H(x)$
as
\[
G^0(x)+\sum\limits_{i=1}\limits^{\infty}G^i(x)\quad \mbox{ and
}\quad H^0(x)+\sum\limits_{i=1}\limits^{\infty}H^i(x),
\]
where $G^i(x)$ and $H^i(x)$ are homogeneous polynomials of degrees
$\deg{G^0(x)}+i$ and $\deg{H^0(x)}+i$, respectively. Then we have
\begin{eqnarray}\label{e3.5}
R(x)=\frac{G(x)}{H(x)}&=&\left(\frac{G^0(x)}{H^0(x)}+\sum\limits_{i=1}
\limits^{\infty}\frac{G^i(x)}{H^0(x)}\right)\left(1+\sum
\limits_{i=1}
\limits^{\infty}\frac{H^i(x)}{H^0(x)}\right)^{-1}\nonumber\\
&= &
\frac{G^0(x)}{H^0(x)}+\sum\limits_{i=1}\limits^{\infty}\frac{A^i(x)}{B^i(x)},
\end{eqnarray}
where $A^i(x)$ and $B^i(x)$ are homogeneous polynomials. Clearly,
\[
\deg{G^0(x)}-\deg{H^0(x)}<\deg {A^i(x)}-\deg{B^i(x)} \qquad \mbox{
for all } i\ge 1.
\]
In what follows we will say that $\deg {A^i(x)}-\deg{B^i(x)}$ is the
degree of $A^i(x)/B^i(x)$, and $G^0(x)/H^0(x)$ is the lowest degree
term of $R(x)$ in the expansion \eqref{e3.5}.  For simplicity we
denote
\[
d(G)=\deg{G^0(x)}, \qquad d(R)=d(G)-d(H)=\deg{G^0(x)}-\deg{H^0(x)},
\]
and call $d(R)$ the {\it lowest degree} of $R$.

\smallskip

The following known result, first proved by Ziglin \cite{zig} in
1983, see also \cite{bai,CLZ11,ito,Zh17}, will be used in the proof
of Theorem \ref{t1}.

\smallskip

\begin{lemma}\label{l3.2} Let
\[
R_1(x)=\frac{G_1(x)}{H_1(x)},\ldots, R_m(x)=\frac{G_m(x)}{H_m(x)},
\]
be functionally independent meromorphic functions in $(\mathbb
C^n,0)$. Then there exist polynomials $P_i(z_1,\ldots,z_i)$ for
$i=2,\ldots, m$ such that $R_1(x),$ $\widetilde
R_2(x)=P_2(R_1(x),R_2(x)),$ $\ldots, \widetilde
R_m(x)=P_m(R_1(x),\ldots,R_m(x))$ are functionally independent
meromorphic functions, and that $R_1^0(x),$ $\widetilde R_2^0(x),$
$\ldots, \widetilde R_m^0(x)$ are functionally independent rational
functions.
\end{lemma}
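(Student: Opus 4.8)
The plan is to translate ``functionally independent'' into ``algebraically independent over $\C$'' --- in characteristic zero these coincide for meromorphic germs, functional independence being the generic maximality of the Jacobian rank --- and then to argue by induction on $m$, building the $\widetilde R_i$ one at a time. Throughout I would use that the leading-term (symbol) map $R\mapsto R^0$ is multiplicative, $(RS)^0=R^0S^0$, and that the lowest degree $d$ behaves like a rank-one valuation: $d(RS)=d(R)+d(S)$ and $d(R+S)\ge\min(d(R),d(S))$, with $(R+S)^0=R^0$ when $d(R)<d(S)$ and $(R+S)^0=R^0+S^0$ when the two lowest degrees agree and the leading parts do not cancel. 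After replacing each $R_i$ by $R_i-R_i(0)$ where needed, I may assume every leading term is nonconstant, which settles the base case $R_1$.

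For the inductive step, suppose $\widetilde R_1,\dots,\widetilde R_j$ have been produced as triangular polynomial combinations of $R_1,\dots,R_j$ so that their leading terms $\phi_1=\widetilde R_1^0,\dots,\phi_j=\widetilde R_j^0$ are algebraically independent homogeneous rational functions, and write $F=\C(\phi_1,\dots,\phi_j)$. If $R_{j+1}^0$ is already algebraically independent of the $\phi_i$, I take $\widetilde R_{j+1}=R_{j+1}$. Otherwise $R_{j+1}^0$ satisfies a nontrivial polynomial relation over $F$; splitting it into isobaric parts for the weighting $\deg\phi_i=d(\widetilde R_i)$, $\deg z_{j+1}=d(R_{j+1})$, I extract one weight-$w$ component $Q$ with $Q(\phi_1,\dots,\phi_j,R_{j+1}^0)=0$ in which $z_{j+1}$ genuinely occurs. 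Then $S=Q(\widetilde R_1,\dots,\widetilde R_j,R_{j+1})$ is a polynomial combination whose weight-$w$ leading part cancels, so $d(S)>w$, while $S$ still involves $R_{j+1}$ nontrivially and hence is algebraically independent of $\widetilde R_1,\dots,\widetilde R_j$. Iterating this cancellation with $S$ in place of $R_{j+1}$ strictly increases the lowest degree at each stage.

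The crux is to show this process terminates, i.e. that some polynomial combination of $\widetilde R_1,\dots,\widetilde R_j,R_{j+1}$ involving $R_{j+1}$ has a leading term functionally independent of $\phi_1,\dots,\phi_j$. Equivalently --- and this is what I expect to be the main obstacle --- the leading-term map must not lower transcendence degree: the $\C$-algebra generated by $\{S^0:\,S\in\C(R_1,\dots,R_{j+1})\}$ should have transcendence degree $j+1$, the same as $\C(R_1,\dots,R_{j+1})$. Granting this, since $\phi_1,\dots,\phi_j$ form a transcendence basis of the corresponding algebra for $\C(R_1,\dots,R_j)$ (whose symbol algebra has transcendence degree $j$), some $S\in\C(R_1,\dots,R_{j+1})$ has $S^0$ transcendental over $F$; writing $S=U/V$ with $U,V$ polynomial in the $R_i$, one of $U^0,V^0$ is transcendental over $F$, and the corresponding polynomial must involve $R_{j+1}$, thereby furnishing $\widetilde R_{j+1}=P_{j+1}(R_1,\dots,R_{j+1})$ with $\widetilde R_{j+1}^0$ independent of the previous leading terms. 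I would establish the transcendence-degree equality through the cancellation mechanism above together with a valuation-theoretic bound: $d$ is a discrete rank-one valuation whose associated graded ring is an integral domain, and for the order valuation on germs at $0$ the residue transcendence degree plus the rational rank accounts for the full transcendence degree, so repeatedly exposing lower-degree leading terms cannot forever produce terms algebraic over $F$. Checking that no transcendence degree is lost upon restriction to the subfield $\C(R_1,\dots,R_{j+1})$ is the delicate point, and is where I would spend the most care.
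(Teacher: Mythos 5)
Your proposal must be measured against the standard proof (the paper does not prove Lemma~\ref{l3.2}; it quotes it as Ziglin's result, with proofs in \cite{zig,bai,CLZ11,ito,Zh17}). Its first half is fine and is indeed the classical opening move: the isobaric decomposition of an algebraic relation among the homogeneous leading terms (obtained by scaling $x\mapsto tx$), the observation that every nonzero isobaric component must involve $z_{j+1}$ because $\phi_1,\dots,\phi_j$ are algebraically independent, and the substitution $S=Q(\widetilde R_1,\dots,\widetilde R_j,R_{j+1})$ that kills the weight-$w$ part, strictly raises the lowest degree, and keeps $S$ transcendental over $\C(\widetilde R_1,\dots,\widetilde R_j)$. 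The trouble begins with your very first sentence: for meromorphic germs, algebraic independence over $\C$ does \emph{not} imply functional independence. The germs $R_1=x_1$ and $R_2=e^{x_1}$ at $0\in\C^n$ are algebraically independent over $\C$ yet satisfy $dR_1\wedge dR_2\equiv 0$. Only the implication (functional $\Rightarrow$ algebraic) holds in general; the converse is true for \emph{rational} functions by the Jacobian criterion, which is why your conclusion about the leading terms $\widetilde R_i^{\,0}$ would be acceptable, but the functional independence of the meromorphic $\widetilde R_i$ themselves, asserted in the lemma, also requires the (easy, triangular chain-rule) Jacobian argument that your purely transcendence-theoretic bookkeeping omits.

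This is not a cosmetic slip, because after that translation you never use the Jacobian hypothesis again, and the crux you isolate --- that passing to leading terms does not lower the transcendence degree of $\C(R_1,\dots,R_{j+1})$ --- is \emph{false} under mere algebraic independence. Take $K=\C(x_1,e^{x_1})$, of transcendence degree $2$: the lowest-degree term of every element of $K$ is a monomial $c\,x_1^k$, so the algebra of leading terms has transcendence degree $1$. On this pair (normalized to $x_1$ and $e^{x_1}-1$) your cancellation loop runs forever, each pass raising the order by one as it assembles the Taylor series of $e^{x_1}-1$ term by term, and every intermediate verification you list holds at every stage; the pair is of course not functionally independent, so Ziglin's lemma is untouched, but your framework cannot see the difference. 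Consequently no argument that has discarded functional independence can establish your key claim, and the valuation-theoretic principle you invoke cannot rescue it: Abhyankar's inequality bounds $\operatorname{trdeg}(k_v)$ plus the rational rank of the value group \emph{above} by $\operatorname{trdeg}(K)$, i.e.\ it goes the wrong way, and equality (the Abhyankar property) genuinely fails for the order valuation restricted to a subfield of germs --- the example above is exactly such a failure, occurring precisely in the situation you would need to exclude. So the step you defer as ``the delicate point'' is the entire content of the lemma: the proofs in \cite{zig,ito,Zh17} run the same cancellation but derive termination from $dR_1\wedge\dots\wedge dR_{j+1}\not\equiv 0$ used in an essential, analytic way, not merely through transcendence degrees. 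As written, your argument has an unfillable gap at exactly that point.
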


\smallskip

Next we give some properties that meromorphic first integrals of
diffeomorphism \eqref{e1.1} must have. A {\it rational monomial} is
by definition the ratio of two monomials, i.e. of the form
$x^{\mathbf p}/x^{\mathbf q}$ with $\mathbf p,\mathbf q\in
(\mathbb{N}_0 )^n,$ where  $\mathbb{N}_0 =\mathbb N\cup\{0\}$ and
$\mathbb N$ is the set of positive integers. The rational monomial
$x^{\mathbf p}/x^{\mathbf q}$ is {\it resonant} if $\mu^{\mathbf
p-\mathbf q}=1$. A rational function is {\it homogeneous} if its
denominator and numerator are both homogeneous polynomials. A
rational homogeneous function is {\it resonant} if the ratio of any
two elements in the set of all its monomials in both denominator and
numerator is a resonant rational monomial.

\smallskip

For the diffeomorphism \eqref{e1.1} we assume without loss of
generality that $A={\rm D} f (0)$ is in its Jordan normal form and
is a lower triangular matrix.

\smallskip

\begin{lemma}\label{l3.3}
Let  $R(x)=G(x)/H(x)$ be a meromorphic first integral of the
analytic diffeomorphism \eqref{e1.1}. By changing $R$ by $R-a,$ for
some suitable constant $a\in \mathbb C,$ if needed,  it is not
restrictive to assume that $R^0(x)=G^0(x)/H^0(x)$ is  non--constant.
Moreover $R^0$ is a resonant rational homogeneous first integral of
the linear part of $f(x).$
\end{lemma}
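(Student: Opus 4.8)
The plan is to treat the two assertions separately. For the reduction to a non-constant $R^0$, I would distinguish according to the lowest degree $d(R)=d(G)-d(H)$. If $d(R)\ne 0$ then $R^0=G^0/H^0$ is a quotient of homogeneous polynomials of different degrees, hence a homogeneous rational function of nonzero degree, which is automatically non-constant; so nothing needs to be done. If $d(R)=0$ and $R^0$ happens to equal a constant $a\in\mathbb C$, then $G^0=aH^0$ and I would replace $R$ by the first integral $R-a=(G-aH)/H$: the numerator $G-aH$ now has its lowest homogeneous part in degree strictly larger than $d(G)$, so $d(R-a)>0$ and $(R-a)^0$ is homogeneous of positive degree, hence non-constant. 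Here one uses that $R$ is genuinely non-constant, so $G-aH\not\equiv 0$.

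Next I would show that $R^0$ is a first integral of the linear part $x\mapsto Ax$. Writing $f(x)=Ax+O(\|x\|^2)$ and expanding, the lowest degree homogeneous part of $G(f(x))$ is $G^0(Ax)$, of degree $d(G)$, because $A$ is invertible so $G^0(Ax)\ne 0$; likewise the lowest part of $H(f(x))$ is $H^0(Ax)$. Inserting these into the defining identity $G(f(x))H(x)=G(x)H(f(x))$ and equating the homogeneous parts of the minimal degree $d(G)+d(H)$ on both sides yields $G^0(Ax)H^0(x)=G^0(x)H^0(Ax)$, that is, $R^0(Ax)=R^0(x)$. Thus $R^0$ is a non-constant homogeneous rational first integral of the linearization.

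The main work is the resonance statement, and here the obstacle is that $A$ need not be diagonal: in its Jordan form $A=\mathrm{diag}(\mu)+N$ the monomials are not eigenvectors, so the eigenvalues $\mu$ are not directly visible in the invariance $R^0(Ax)=R^0(x)$. To expose them I would pass to the semisimple part $A_s=\mathrm{diag}(\mu)$. Since $R^0$ is invariant under every power of $A$ and the set $\{B:\, R^0(Bx)=R^0(x)\}$ is a Zariski closed subgroup, it contains the Zariski closure of $\langle A\rangle$ and hence, by the multiplicative Jordan decomposition, the semisimple factor $A_s$. Therefore $R^0(A_sx)=R^0(x)$, and I am reduced to the diagonal case. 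This passage to $A_s$ is the delicate point; alternatively it can be carried out by a direct filtration argument using that $A$ is lower triangular.

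Finally, with $A_s=\mathrm{diag}(\mu)$ I would exploit coprimality. Writing $R^0=\tilde G/\tilde H$ in reduced form, with $\tilde G,\tilde H$ coprime homogeneous polynomials, the invariance gives $\tilde G(A_sx)\tilde H(x)=\tilde G(x)\tilde H(A_sx)$; since $\gcd(\tilde G,\tilde H)=1$ one gets $\tilde G(x)\mid \tilde G(A_sx)$, and comparing degrees yields $\tilde G(A_sx)=c\,\tilde G(x)$ and then $\tilde H(A_sx)=c\,\tilde H(x)$ for a single constant $c\in\mathbb C^*$. Writing $\tilde G=\sum_{\mathbf p}g_{\mathbf p}x^{\mathbf p}$, the relation $\tilde G(A_sx)=c\,\tilde G(x)$ forces $\mu^{\mathbf p}=c$ for every monomial $x^{\mathbf p}$ of $\tilde G$, and similarly $\mu^{\mathbf q}=c$ for every monomial $x^{\mathbf q}$ of $\tilde H$. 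Hence any two monomials occurring in the numerator or denominator satisfy $\mu^{\mathbf p-\mathbf q}=1$, so $R^0$ is a resonant rational homogeneous first integral, as claimed. I expect the only point requiring care, besides the reduction to $A_s$, to be the remark that resonance is read off from the reduced representative of $R^0$.
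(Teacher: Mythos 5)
Your proof is correct, and its first two steps (the reduction to non-constant $R^0$ via $R-a$, and extracting $G^0(Ax)H^0(x)=G^0(x)H^0(Ax)$ from the lowest-degree homogeneous part of $G(f(x))H(x)=G(x)H(f(x))$) coincide with the paper's, with a bit more detail on your side about why $d(R-a)>0$ and why $G^0(Ax)$ is the lowest part. Where you genuinely diverge is the resonance step. The paper never passes to the semisimple part: it applies the coprimality argument directly to $A$ to get $G^0(Ax)=c\,G^0(x)$ and $H^0(Ax)=c\,H^0(x)$, and then invokes Lemma \ref{l3.4} on the operator $\mathcal{L}_c(h)(x)=h(Ax)-c\,h(x)$, splitting $\mathcal H_n^p$ into the span $\mathcal H_{n,1}^p$ of monomials with $\mu^{\mathbf p}=c$ and the span $\mathcal H_{n,2}^p$ of the remaining monomials; since $A$ is in lower-triangular Jordan form, both subspaces are $\mathcal{L}_c$-invariant (the nilpotent part only mixes monomials within a Jordan block, where the eigenvalues agree, so $\mu^{\mathbf p}$ is unchanged), and invertibility of $\mathcal{L}_c$ on $\mathcal H_{n,2}^p$ forces $G^0\in\mathcal H_{n,1}^p$. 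You instead reduce to the diagonal matrix $A_s$ first, via the observation that $\{B:\,R^0(Bx)=R^0(x)\}$ is a Zariski closed subgroup of $\operatorname{GL}_n(\mathbb{C})$ and therefore contains the Jordan components of its elements (Chevalley's theorem on Jordan decomposition in algebraic groups), after which coprimality plus a direct monomial comparison for diagonal $A_s$ yields $\mu^{\mathbf p}=c$ on the numerator and $\mu^{\mathbf q}=c$ on the denominator. Both routes are sound and give the same conclusion; yours trades the paper's self-contained spectral lemma (which parallels the Poincar\'e-type ODE arguments the paper is modeled on, and is reused in the proof of Theorem \ref{t1}) for a more conceptual but less elementary input from the theory of algebraic groups, while the filtration argument you mention as an alternative is essentially the paper's triangularity argument in disguise. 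One small point you rightly flag and handle the same way the paper does: resonance is read off the coprime representative $\tilde G/\tilde H$ of $R^0$, which is the representative the paper also reduces to without loss of generality.
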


\smallskip

For proving this last lemma we will use the following result, for a
proof see any of the references \cite{bib, LLZ, Zh13,Zh17}.

\begin{lemma}\label{l3.4}
Let $\mathcal H_n^p$ be the linear space of complex coefficient
homogeneous polynomials of degree $p$ in $n$ variables. For any
constant $c\in\mathbb C$, define a linear operator on $\mathcal
H_n^p$ by
\[
\mathcal{L}_c(h)(x)=h(Ax)-c\,h(x), \qquad h(x)\in \mathcal H_n^p.
\]
Then the spectrum of $\mathcal{L}_c$ is
\[
\{\mu^{\mathbf k}-c:\,\,\mathbf k\in(\mathbb{N}_0 )^n,\, |\mathbf
k|=k_1+\ldots+k_n=p\},
\]
where $\mu$ are the eigenvalues of $A$.
\end{lemma}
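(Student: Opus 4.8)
The plan is to recognize $\mathcal{L}_c$ as a shift of the composition operator $\mathcal{T}(h)(x)=h(Ax)$, namely $\mathcal{L}_c=\mathcal{T}-c\,\mathrm{Id}$ on the finite--dimensional space $\mathcal H_n^p$. Since the spectrum of a finite--dimensional operator is just its set of eigenvalues, and subtracting $c\,\mathrm{Id}$ shifts every eigenvalue by $-c$, it suffices to prove that the eigenvalues of $\mathcal{T}$ are exactly the numbers $\mu^{\mathbf k}$ with $\mathbf k\in(\mathbb N_0)^n$ and $|\mathbf k|=p$.

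To compute these I would use the standing assumption that $A$ is in lower--triangular Jordan normal form, so that $(Ax)_i=\mu_i x_i+\nu_i x_{i-1}$ with $\nu_i\in\{0,1\}$ (and $\nu_1=0$), the $\mu_i$ being the diagonal eigenvalues. For a monomial $x^{\mathbf k}$ of degree $p$,
\[
\mathcal{T}(x^{\mathbf k})=(Ax)^{\mathbf k}=\prod_{i=1}^n\bigl(\mu_i x_i+\nu_i x_{i-1}\bigr)^{k_i},
\]
whose binomial expansion contains the single ``top'' term $\mu^{\mathbf k}x^{\mathbf k}$ together with terms obtained by replacing one or more occurrences of some $x_i$ by $x_{i-1}$. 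Ordering the monomial basis of $\mathcal H_n^p$ by the weight $W(\mathbf k)=\sum_{i=1}^n i\,k_i$, each such replacement keeps the total degree equal to $p$ but lowers $W$ by exactly one; hence $\mathcal{T}(x^{\mathbf k})=\mu^{\mathbf k}x^{\mathbf k}+(\text{terms of strictly smaller }W)$. Listing the basis by increasing $W$, the matrix of $\mathcal{T}$ is therefore upper triangular with diagonal entries $\mu^{\mathbf k}$, so these are precisely its eigenvalues, and the stated spectrum of $\mathcal{L}_c$ follows after the shift by $c$.

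I expect the only delicate point to be checking that the chosen weight genuinely triangularizes $\mathcal{T}$: one must confirm that every off--diagonal contribution coming from the nilpotent part of the Jordan form strictly decreases $W$, which holds because each elementary substitution $x_i\mapsto x_{i-1}$ changes the weight by $(i-1)-i=-1$ while preserving $|\mathbf k|=p$. If $A$ happens to be diagonalizable the bookkeeping disappears entirely---$\mathcal{T}$ is then diagonal in the monomial basis---so the triangular argument is needed only to absorb the Jordan blocks.
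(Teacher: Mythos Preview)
Your argument is correct. Note, however, that the paper does not supply its own proof of this lemma: it is stated as a known result with a pointer to the references \cite{bib, LLZ, Zh13, Zh17}. The triangularization you give---ordering the monomial basis of $\mathcal H_n^p$ by the weight $W(\mathbf k)=\sum_i i\,k_i$ so that the nilpotent part of the lower--triangular Jordan form only produces contributions of strictly smaller weight---is precisely the standard route to this fact, so there is nothing to compare. One small remark: different monomials can share the same weight $W$, but your computation already shows that every off--diagonal contribution strictly lowers $W$, so within a fixed weight level the operator is diagonal and the triangular form is genuine.
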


\smallskip

\begin{proof}[Proof of Lemma \ref{l3.3}] If $R^0(x)=G^0(x)/H^0(x)\equiv
a$ is  constant, the function $R(x)-a$ is also a meromorphic first
integral and $(R(x)-a)^0$ is not constant. Hence, without loss of
generality, we can assume that $R$ is a meromorphic first integral
and that  $R^0$ is not identically constant.

Let us prove that $R^0$ is a resonant rational homogeneous first
integral. As in \eqref{e3.5} we write $R(x)$ as
\[
R(x)=R^0(x)+\sum\limits_{i=1}\limits^\infty R^i(x),
\]
where $R^0(x)$ is the lowest order rational homogeneous function and
$R^i(x)$ for $i\in \mathbb N$ are rational homogeneous functions of
order larger than $R^0(x)$. Since $R(x)$ is a first integral of the
diffeomorphism $f(x)$ in a neighborhood of $0\in \mathbb C^n$, we
have
\[
R(f(x))=R(x),\qquad x\in(\mathbb C^n,0).
\]
Equating the lowest order rational homogeneous functions we get
\begin{equation}\label{e3.6}
R^0(Ax)=R^0(x),\qquad i.e. \quad
\frac{G^0(Ax)}{H^0(Ax)}=\frac{G^0(x)}{H^0(x)}.
\end{equation}
This implies that $R^0(x)$ is a rational homogeneous first integral
of the linear part of the analytic diffeomorphism \eqref{e1.1}.

\smallskip

Next we shall prove that $R^0(x)$ is resonant. {From} the equality
\eqref{e3.6} we can assume without loss of generality that $G^0(x)$
and $H^0(x)$ are relative prime. Now equation \eqref{e3.6} can be
written as
\[
H^0(x)G^0(Ax)=G^0(x)H^0(Ax).
\]
Since $G^0$ and $H^0$ are relatively prime, and $\mathbb C[x]$ is a
unique factorization domain (see e.g. \cite[p. 2]{Fu69}), we get
that $G^0(x)$ divides $G^0(A x)$ and $H^0(x)$ divides $H^0(A x)$. In
addition, $G^0(Ax)$ and $G^0(x)$ have the same degree, so there
exists a constant $c$ such that
\[
G^0(Ax)-c\,G^0(x)\equiv 0,\quad H^0(Ax)-c\,H^0(x)\equiv 0.
\]
We remark that if $G^0(x)\equiv 1$ or $H^0(x)\equiv 1$, we have
$c=1$. Set $\deg G^0(x)=l$, $\deg H^0(x)=m$ and $\mathcal{L}_c$
the linear operator defined in Lemma \ref{l3.4}. Recall from Lemma
\ref{l3.4} that $\mathcal{L}_c$ has respectively the spectrums on
$\mathcal H_n^p$
\[
\mathcal S_p:=\{\mu^{\textbf{p}}-c:\,\,\textbf{p}\in(\mathbb
Z^+)^n,\, |\textbf{p}|=p\},
\]
and on $\mathcal H_n^q$
\[
\mathcal S_q:=\{\mu^{\textbf{q}}-c:\,\,\textbf{q}\in(\mathbb
Z^+)^n,\, |\textbf{q}|=q\}.
\]

\smallskip

Separate $\mathcal H_n^p=\mathcal H_{n,1}^p+\mathcal H_{n,2}^p$ in
such a way that for any $p(x)\in \mathcal H_{n,1}^l $ its monomial
$x^{\mathbf p}$ satisfies $\mu^{\textbf{p}}-c=0$, and for any
$Q(x)\in \mathcal H_{n,2}^p $ its monomial $x^{\mathbf p}$ satisfies
$\mu^{\textbf{p}}-c\ne 0$. Separate $G^0(x)$ in two parts
$G^0(x)=G_1^0(x)+G_2^0(x)$ with $G_1^0\in\mathcal H_{n,1}^p$ and
$G_2^0\in\mathcal H_{n,2}^p$. Since $A$ is in its Jordan normal form
and is lower triangular, it follows that
\[
\mathcal{L}_c\mathcal H_{n,1}^p\subset \mathcal H_{n,1}^p, \quad
\mbox{and} \quad \mathcal{L}_c\mathcal H_{n,2}^p\subset \mathcal
H_{n,2}^p.
\]
Hence $\mathcal{L}_c G^0(x)\equiv 0$ is equivalent to
\[
\mathcal{L}_c G_1^0(x)\equiv 0\quad \mbox{and } \quad \mathcal{L}_c
G_2^0(x)\equiv 0.
\]
Since $\mathcal{L}_c $ has the spectrum without zero element on
$\mathcal H_{n,2}^p$ and so it is invertible on $\mathcal
H_{n,2}^p$, the equation $\mathcal{L}_c G_2^0(x)\equiv 0$ has only
the trivial solution, i.e. $G_2^0(x)\equiv 0$. This proves that
$G^0(x)=G_1^0(x)$, i.e. each monomial, say  $x^{\mathbf p}$, of
$G^0(x)$ satisfies $\mu^{ {\mathbf p}}-c=0$.

\smallskip

Similarly we can prove that each monomial, say  $x^{\mathbf q}$, of
$H^0(x)$ satisfies $\mu^{{\mathbf q}}-c=0$. This implies that
$\mu^{{\mathbf p-\mathbf q}}=1$. The above proofs show that
$R^0(x)=G^0(x)/H^0(x)$ is a resonant rational homogeneous first
integral of the linear part of $f(x)$. \end{proof}

\smallskip

Having the above lemmas we can prove Theorem \ref{t1}.

\begin{proof}[Proof of Theorem \ref{t1}]
Let
\[
R_1(x)=\frac{G_1(x)}{H_1(x)},\,\ldots,\,
R_m(x)=\frac{G_m(x)}{H_m(x)},
\]
be the $m$ functionally independent meromorphic first integrals of
the diffeomorphism $f$. Since the polynomial functions of $R_i(x)$
for $i=1,\ldots,m$ are also meromorphic first integrals of the
diffeomorphsim $f$,  by Lemma \ref{l3.2} we can assume without
loss of generality that
\[
R_1^0(x)=\frac{G_1^0(x)}{H_1^0(x)},\ldots,R_m^0(x)=\frac{G_m^0(x)}{H_m^0(x)},
\]
are functionally independent.

\smallskip

Lemma \ref{l3.3} shows that $R_1^0(x),\ldots,R_m^0(x)$ are resonant
rational homogeneous first integrals of the linear part $Ax$ of
$f(x)$. So these first integrals can be written as rational
functions in the variables given by resonant rational monomials.
Write $A=A_S+A_N$ with $A_S=\mbox{diag}(\mu_1,\ldots,\mu_n)$ and
$A_N$ nilpotent. Then direct calculations show that any resonant
rational monomial is a first integral of $A_S x$. For instance, let
$x^{\bf k}$ be a resonant rational monomial, then $\mu^{\bf k}=1$.
Hence, we have $(A_S x)^{\bf k}=\mu^{\bf k}x^{\bf k}=x^{\bf k}$.
This implies that $R_1^0(x),\ldots,R_m^0(x)$ are also first
integrals of $A_S x$. We claim that $m$ is less than or equal to the
number of elements in a basis of the $\mathbb Z$--linear space
generated from $\{\mathbf k\in \mathbb Z^n:\, \mu^{\mathbf k}=1\}$.
We denote by $\gamma$ this last number.

Indeed, since $R_1^0(x),\ldots,R_m^0(x)$ are the first integrals of
$A_S x$, we only need to prove that the number of functionally
independent resonant rational homogeneous first integrals of $A_S x$
is equal to $\gamma$. In fact, from Lemma \ref{l3.3} and its proof
it follows that any resonant rational homogeneous first integral
consists of resonant rational monomials. This means that the maximum
number of functionally independent resonant rational homogeneous
first integrals is equal to the maximum number of functionally
independent resonant rational monomials. Whereas, by definition a
resonant rational monomial, saying $x^{\mathbf k}$ for $\mathbf k\in
\mathbb Z^n$, satisfies $\mu^{\mathbf k}=1$. This proves the claim.

\smallskip

After the claim, the proof of the theorem is completed. \end{proof}

\section{Applications}\label{sapp}

\subsection{Some simple examples} We start studying the integrability of two simple
examples coming from second order difference equations. Recall that
the study of the sequences generated by  second order difference
equations $x_{n+2}=g(x_n,x_{n+1})$ can be reduced to the study of
the DDS generated by the planar map $f(x,y)=(y,g(x,y)).$ Moreover,
the first integrals $H$ for the DDS are usually called {\it
invariants} for the difference equation. Then
$H(x_n,x_{n+1})=H(x_{n+1},x_{x+2})$ for all $n\in\N.$

As a first example we study the integrability of the planar map
\begin{equation}\label{eq:ex1}
f(x,y)=(y,-bx+c/y), \quad   b,c\in\C,b\ne0.
\end{equation}
coming from the difference equation $x_{n+2}=-b x_n +c/x_{n+1}.$  We
prove the following proposition.

\begin{proposition}\label{pr:1} If the planar map \eqref{eq:ex1} has a
meromorphic first integral then $b$ is a root of the unity. In
particular, if $b,c\in\R$ and the map has a meromorphic first
integral then $b\in\{-1,1\}.$
\end{proposition}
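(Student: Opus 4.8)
The plan is to apply Theorem~\ref{t1} to the map \eqref{eq:ex1}. The strategy is entirely determined by the structure of the theorem: a meromorphic first integral can exist only if the $\mathbb Z$--linear span of the resonant lattice $\{\mathbf k\in\mathbb Z^2:\,\mu^{\mathbf k}=1\}$ has positive dimension, i.e.\ only if the eigenvalues of $\mathrm Df$ at a fixed point admit a nontrivial resonance $\mu_1^{k_1}\mu_2^{k_2}=1$. So the whole proof reduces to computing the fixed points of $f$, diagonalizing the differential there, and extracting the arithmetic condition on $b$ forced by the existence of a resonance.

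First I would locate the fixed points by solving $f(x,y)=(x,y)$, which gives $y=x$ and $x=-bx+c/x$, i.e.\ $(1+b)x^2=c$. Generically this yields two fixed points with $x^2=c/(1+b)$ (the case $b=-1$ must be handled separately, but then $b$ is already a root of unity and the conclusion holds). At such a point I compute the Jacobian
\[
\mathrm Df(x,y)=\begin{pmatrix} 0 & 1\\ -b & -c/y^2\end{pmatrix},
\]
whose determinant is $b$ and whose trace is $-c/y^2$. The key observation is that $\det \mathrm Df = b$ is independent of the fixed point, so the product of the two eigenvalues satisfies $\mu_1\mu_2=b$.

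Next I would exploit the resonance condition. By Theorem~\ref{t1}, existence of a meromorphic first integral forces some $\mathbf k=(k_1,k_2)\ne 0$ with $\mu_1^{k_1}\mu_2^{k_2}=1$. The cleanest route is to show that a resonance together with $\mu_1\mu_2=b$ forces $b$ to be a root of unity. If the fixed point is chosen so that the eigenvalues are multiplicatively related in a controlled way, or if one analyzes the possible $\mathbf k$ directly, then taking appropriate products/quotients of $\mu_1^{k_1}\mu_2^{k_2}=1$ with powers of $\mu_1\mu_2=b$ should eliminate one eigenvalue and leave an equation of the form $b^{N}=1$ for some nonzero integer $N$. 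For the real case $b,c\in\mathbb R$, a root of unity that is real must be $\pm1$, giving the stated refinement $b\in\{-1,1\}$.

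The main obstacle I anticipate is controlling the two eigenvalues well enough to conclude $b^N=1$ from a single resonance, since the trace $-c/y^2$ depends on $c$ and on which fixed point is used, so $\mu_1,\mu_2$ individually are messy. The trick that makes this tractable is that only the product $\mu_1\mu_2=b$ is clean, so I expect the argument to hinge on reducing the resonance to the symmetric functions of the eigenvalues: from $\mu_1^{k_1}\mu_2^{k_2}=1$ one wants to extract a relation involving only $\mu_1\mu_2$. This works transparently when $k_1=k_2=k$, giving $(\mu_1\mu_2)^k=b^k=1$ immediately; the real content is to rule out or absorb the case $k_1\ne k_2$, presumably by using the second fixed point (whose eigenvalues are governed by the same $b$) or by a Galois/symmetry argument showing any genuine resonance propagates to a symmetric one. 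I would treat that case-analysis as the crux and expect the globally periodic structure mentioned in the introduction to be the conceptual reason the conclusion comes out as "$b$ is a root of unity."
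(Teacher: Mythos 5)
Your setup coincides with the paper's up to the point where the work actually begins: you find the fixed points $(z,z)$ with $(1+b)z^2=c$, set aside $b=-1$ (correctly, since then $b^2=1$), note $\det {\rm D}f=b$ so $\mu_1\mu_2=b$, invoke Theorem~\ref{t1} to get a resonance $\mu_1^{k_1}\mu_2^{k_2}=1$, and dispose of the symmetric case $k_1=k_2$ via $b^{k_1}=1$. All of that is in the paper's proof too. But you then declare the case $k_1\neq k_2$ "the crux" and leave it to a hoped-for Galois/symmetry argument or to the second fixed point --- and that is precisely where the proof lives. The decisive computation you miss is that the trace is \emph{not} messy: the fixed-point equation forces $c/z^2=b+1$, so ${\rm D}f(z,z)$ has trace $-(b+1)$ and the characteristic polynomial is $p(\mu)=\mu^2+(b+1)\mu+b$, independent of $c$ and of which fixed point is used. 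The paper exploits this by writing the eigenvalues as $\mu$ and $b/\mu$, extracting $\mu^{m-n}=b^{m}$ from the resonance, substituting $\mu=b^{m/(m-n)}$ into $p(\mu)=0$, and setting $b=a^{m-n}$ to obtain the factorization $a^{m-n}(a^m+1)(a^n+1)=0$, which forces $a$, hence $b=a^{m-n}$, to be a root of unity. Nothing in your sketch performs this elimination; reducing to symmetric functions of the eigenvalues, as you propose, only recovers $\mu_1\mu_2=b$ and the trace, i.e.\ exactly the case you had already handled. (You also never treat $c=0$, which the paper does separately at the fixed point $(0,0)$; there your Jacobian entry $-c/y^2$ degenerates.)

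Moreover, the fallbacks you pencil in for the asymmetric case cannot succeed as stated, so the gap is not merely an unexecuted routine step. Since $p(\mu)=\mu^2+(b+1)\mu+b=(\mu+1)(\mu+b)$, the eigenvalues at \emph{every} fixed point are $-1$ and $-b$; hence the resonance $(k_1,k_2)=(2,0)$, coming from $(-1)^2=1$, is present unconditionally, for all $b$, and no manipulation of it with $\mu_1\mu_2=b$ produces a relation $b^N=1$ with $N\neq0$. The second fixed point has the same spectrum, so it adds nothing, and no symmetry argument will convert $(2,0)$ into a symmetric resonance. Any complete argument must, as the paper does, feed the resonance back into the characteristic polynomial itself --- and even there the substitution $\mu=b^{m/(m-n)}$ is a branch choice that needs care precisely because of the persistent eigenvalue $-1$ (for $(m,n)=(0,2)$ it selects $\mu=1$ while the resonant eigenvalue is $-1$; a similar unconditional resonance $\mu_1^2\mu_2^{-2}=1$ occurs in the $c=0$ case). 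In short: your proposal reproduces the paper's easy half and explicitly defers the half that carries all the content, and the routes you suggest for that half run into resonances from which the desired conclusion cannot be extracted; this is a genuine gap, not a complete proof.
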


Somehow the result for the real case is sharp because when $b=1$ the
function $H(x,y)=x^2y^2-cxy$ is a first integral of \eqref{eq:ex1}.
Moreover when $b=-1$ and $c=0,$ clearly the map is linear and
integrable. The first integral for the case $b=1$ (in fact for its
inverse) and also for many other rational maps are given in
\cite{tor}.  For the complex case, observe that when $c=0$ and $b$
is a root of the unity the map is linear and globally periodic, i.e.
$f^n=\operatorname{Id}$ for some $n\in\N.$  For instance, by using
the results of \cite{CGM} it is easy to construct two functionally
independent rational first integrals for each one of them.

\begin{proof}[Proof of Proposition \ref{pr:1}]  When $c=0$
the map has $(0,0)$ as a fixed point and the eigenvalues of ${\rm D} f
(0,0)$ are $\pm \sqrt b.$ Then, by Theorem \ref{t1}, to have a
meromorphic first integral there must exist $(m,n)\in\Z^2\setminus
(0,0)$ such that $(\sqrt b)^n(-\sqrt b)^m=1$ and, as a consequence
$b$ must be a root of the unity.

\smallskip

From now on we can assume that
$c\ne0.$ Then, the fixed points of $f$ are $(z,z)$ where $(b+1)z^2=c.$
If $b=-1$ we are done because $b^2=1.$ Otherwise, the map has two
(real or complex) fixed points.  Anyhow, for $c(b+1)\ne0,$
\[
{\rm D} f (z,z)= \left(
           \begin{array}{cc}
             0 & 1 \\
             -b & -c/z^2 \\
           \end{array}
         \right)=
         \left(
           \begin{array}{cc}
             0 & 1 \\
             -b & -(b+1) \\
           \end{array}
         \right).
\]
Hence, its eigenvalues satisfy
$p(\mu )=\mu ^2+(b+1)\mu +b=0.$ Since $b\ne0$ we can write
them as $\mu$ and $b/\mu.$ By Theorem \ref{t1}, if $f$ is
meromorphically integrable, there exists $(m,n)\in\Z^2\setminus
(0,0)$ such that $\mu^n(b/\mu)^m=1.$ If $m=n,$ then $b^m=1$ and we
are done, again. Hence, from now on, $m-n\ne0.$ Moreover, $\mu=
b^{m/(m-n)}.$ Thus
\[
p(\mu)=\mu^2+(b+1)\mu+b=b^{\frac{2m}{m-n}}+(b+1)b^{\frac{m}{m-n}}+b=0.
\]
If we introduce $a$ such that $b=a^{m-n},$ the above equality writes
as
\[
a^{2m}+\big(a^{m-n}+1\big)a^m+a^{m-n}=a^{m-n}\big(a^{m+n}+a^m+a^n+1
\big)= a^{m-n}(a^m+1)(a^n+1)=0.
\]
Hence $a$ must be a root of the unity, and as a consequence
$b=a^{m-n}$ also must, as we wanted to prove.
\end{proof}

As a second example,  next lemma  studies conditions for a planar
map to have two functionally independent meromorphic first
integrals.

\begin{lemma}\label{le:n2} Let $f$ be a real analytic planar map with a fixed point
${\bf x}\in\R^2$ such that the characteristic polynomial of ${\rm D}
f ({\bf x})$ is $p(\mu )=\mu ^2+b\mu +c\in\R[\mu ].$
Assume that $f$ has two functionally independent meromorphic  first
integrals. Then, the eigenvalues of ${\rm D} f ({\bf x})$ are roots
of the unity. In particular, if $b^2-4c<0$ then $c=1$ and
otherwise $(b,c)\in\{ (\pm 2,1), (0,-1)\}.$
\end{lemma}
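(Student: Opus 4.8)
The strategy is to apply Theorem~\ref{t1} directly. Since $f$ has two functionally independent meromorphic first integrals, Theorem~\ref{t1} forces the dimension of the $\mathbb{Z}$--linear space generated by $\{\mathbf{k}\in\mathbb{Z}^2:\mu^{\mathbf{k}}=1\}$ to be at least $2$. In dimension $n=2$ this means the space must be all of $\mathbb{Z}^2$, so in particular both standard basis vectors $(1,0)$ and $(0,1)$ lie in it. Concretely, this yields integers making $\mu_1^{k}=1$ and $\mu_2^{\ell}=1$ achievable, i.e. each eigenvalue $\mu_1,\mu_2$ is itself a root of unity. This is the conceptual heart of the argument, and it is short.

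The remaining work is to translate ``both eigenvalues are roots of unity'' into the explicit conditions on $(b,c)$. First I would record that the eigenvalues are the roots of $p(\mu)=\mu^2+b\mu+c$, so $\mu_1\mu_2=c$ and $\mu_1+\mu_2=-b$. If both $\mu_1,\mu_2$ are roots of unity, then $|\mu_1|=|\mu_2|=1$, hence $|c|=|\mu_1\mu_2|=1$. I would then split into the two cases dictated by the discriminant $b^2-4c$.

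In the complex-conjugate case $b^2-4c<0$ (recall $b,c\in\mathbb{R}$), the eigenvalues are genuinely complex conjugates $\mu_2=\overline{\mu_1}$, so $c=\mu_1\overline{\mu_1}=|\mu_1|^2=1$; this gives $c=1$ immediately. In the real case $b^2-4c\geq 0$, both eigenvalues are real roots of unity, hence each lies in $\{-1,+1\}$. Enumerating the four sign combinations for $(\mu_1,\mu_2)$ and reading off $b=-(\mu_1+\mu_2)$ and $c=\mu_1\mu_2$ gives exactly the pairs: $(\mu_1,\mu_2)=(1,1)\Rightarrow(b,c)=(-2,1)$; $(-1,-1)\Rightarrow(2,1)$; and $(1,-1)$ or $(-1,1)\Rightarrow(b,c)=(0,-1)$. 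This reproduces $(b,c)\in\{(\pm2,1),(0,-1)\}$.

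The main (and only real) obstacle is justifying the passage from ``the generated $\mathbb{Z}$--space has dimension $2$'' to ``each $\mu_i$ is separately a root of unity.'' A priori the resonance lattice could be spanned by vectors mixing both coordinates, e.g. $(1,1)$ and $(1,-1)$, which generate a finite-index sublattice rather than all of $\mathbb{Z}^2$. I would argue that since the lattice spans $\mathbb{Q}^2$, for each $i$ there is a nonzero multiple of the standard basis vector $\mathbf{e}_i$ in it—indeed $(1,1),(1,-1)$ together give $(2,0)$ and $(0,2)$—so some power of each eigenvalue equals $1$, which is exactly what ``root of unity'' means. This suffices for the conclusion and sidesteps any need for the lattice to be saturated.
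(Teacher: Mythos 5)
Your proposal is correct and follows essentially the same route as the paper: Theorem~\ref{t1} yields two $\Z$--linearly independent resonance vectors, elimination between them shows each eigenvalue individually satisfies $\mu_i^{N}=1$ for some $N\ne0$ (the paper does this explicitly via $u^{nm'-n'm}=1$ and $v^{mn'-m'n}=1$), and then Vieta's formulas with the discriminant case split give $c=1$ or $(b,c)\in\{(\pm2,1),(0,-1)\}$. Your passing claim that the resonance lattice ``must be all of $\mathbb{Z}^2$'' is inaccurate (it may be a finite-index sublattice), but your final paragraph correctly repairs this with exactly the elimination argument the paper uses, so the proof stands.
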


\begin{proof} Let $u,v\in\C$ be the two eigenvalues of ${\rm D} f ({\bf x})$.
By Theorem \ref{t1}, there exist $(n,m)$ and $(n',m')$ in $\Z^2$
such that
\[
u^nv^m=1,\quad u^{n'}v^{m'}=1,\quad \mbox{and}\quad mn'-m'n\ne0.
\]
Hence $u^{nm'-n'm}=1$ and $v^{mn'-m'n}=1.$ Therefore, $u$ and $v$
are roots of the unity. When $u$ and $v$ are complex we are done,
because $v=\bar u$ and $c=u\bar u=|u|^2=1.$ When $u,v\in\R$ then $u$
and $v$ are either $1$ or $-1$ and the lemma follows.
\end{proof}

We apply the above lemma to study the map
\begin{equation}\label{eq:ex2}
f(x,y)=(y,x^py^q), \quad   p,q\in\Z,\,\, p\ne0,
\end{equation}
that  describes the difference equation $x_{n+2}=x_n^p x_{n+1}^q.$
It has the fixed point $(1,1)$ for all values of $p$ and $q,$ and
provides a good test for our result because it can also be studied
by another approach. It can be linearized on $\{(x,y)\in\R^2\,:\,
x>0, y>0\},$ because with the new variables $(u,v)=(\ln x,\ln y)$
the DDS generated by $f$ is conjugated to the DDS generated by
$g(u,v)=(v, p u+ q v).$

As we have already commented, we will center our attention in finding
the values of $p$ and $q$ such that the DDS generated by $f$ can
have two functionally independent first integrals. Notice that
$f(1,1)=(1,1)$ and
\[
{\rm D} f (1,1)= \left(
           \begin{array}{cc}
             0 & 1 \\
             px^{p-1}y^q & qx^py^{q-1} \\
           \end{array}
         \right)_{(x,y)=(1,1)}=
         \left(
           \begin{array}{cc}
             0 & 1 \\
             p & q \\
           \end{array}
         \right).
\]
Hence the eigenvalues $\mu $ of ${\rm D} f (1,1)$ satisfy $\mu
^2-q\mu -p=0.$ We can use Lemma \ref{le:n2}: since $(p,q)\in\Z^2$,
the only cases for which the map \eqref{eq:ex2} can have two
functionally independent first integrals are
$(p,q)\in\{(-1,-2),(-1,2),(1,0)\}$ when $q^2+4p\ge0,$ and $p=-1$
when $q^2+4p<0.$ Moreover, in this last case all roots of $P(\mu
)=\mu ^2-q\mu +1$ must be roots of the unity.  This only happens
when $q\in\{-1,0,1\},$ because these values are the only ones for
which $P$ is a quadratic cyclotomic polynomial. Recall that the {\it
$p$-th cyclotomic polynomial} $\Phi_p$  is the monic polynomial with
integer coefficients, and irreducible in $\Q(x)$, such that its
roots are the primitive $p$-roots of the unity, that is ${\rm
e}^{2n\pi{\rm i}/p},$ for $p$ and $n$ relatively prime. Recall also
that the only quadratic cyclotomic polynomials correspond to 3$^{\rm
rd}$, 4$^{\rm th}$ and 6$^{\rm th}$ roots of the unity. Hence, from
our tools, we have seen that there are six maps of the form
\eqref{eq:ex2} that are candidates to have two functionally
independent first integrals:
\begin{align*}f_1(x,y)&=\Big(y,\frac1{xy^2}\Big),
&& f_2(x,y)=\Big(y,\frac{y^2}{x}\Big), && f_3(x,y)=\big(y,x\big),\\
f_4(x,y)&=\Big(y,\frac1{xy}\Big), &&
f_5(x,y)=\Big(y,\frac{1}{x}\Big), &&
 f_6(x,y)=\Big(y,\frac y x\Big).
\end{align*}

From these candidates it is not difficult to see, by using the tools
of \cite{CGM}, that $f_j(x,y)$ $j=3,4,5,6$ have the desired
property.   For instance, two functionally independent first
integrals for $f_6$ are
\[
H_1(x,y)=x+\frac1x+y+\frac1y+\frac x y+ \frac y x, \quad H_2(x,y)=
xy+\frac 1{xy}+\frac{x^2}{y}+\frac{y}
{x^2}+\frac{x}{y^2}+\frac{y^2}x.
\]
In fact, these four maps are globally periodic, with periods $2,3,4$
and $6,$ respectively.

\subsection{Planar rational  maps with rational coefficients}

We consider now rational maps
\begin{equation}\label{ratmap1}
f(x,y)=(R_1(x,y),{R_2(x,y)})=\left(\frac{P_1(x,y)}{Q_1(x,y)},\frac{P_2(x,y)}{Q_2(x,y)}\right),
\end{equation}
with coefficients in $\Q,$ that is, with $P_i,Q_i\in\Q[x,y]$ for
$i=1,2.$

\smallskip

We introduce some notation. As usual, when two integer numbers $p$
and $n$ are coprime we will write $(p,n)=1.$ Moreover we will denote
the $p$-th cyclotomic polynomial $\Phi_p$ by $\phi(p).$ In fact,
$\phi(p)$ is the number of integers between 1 and $p$ that are
relatively prime to $p.$

\smallskip

Given two polynomials $P,Q\in\C[x],$ as usual, we denote by
$\operatorname{Res}_x(P(x),Q(x))\in\C$  the resultant of $P$ and $Q$
with respect to  $x.$ Recall that $P$ and $Q$ share some complex
root if and only if $\operatorname{Res}_x(P(x),Q(x))=0,$ see
\cite{stu}.

\smallskip

We  will also need the following  result, which is essentially
contained in (\cite{BDeA,WZ}).

\begin{proposition}\label{pr:trig}   For each
$p\in\N$ there is a polynomial  $M_m\in\Q[x]$ of minimal degree
$m=\phi(p)/2$ such that $M_m(\cos (2n\pi/p))=0$ for all $(p,n)=1.$
Moreover,
\begin{equation}\label{eq:mm}
\operatorname{Res}_x\big(\Phi_p(x),x^2-2xv+1 \big)=M_m^2(v).
\end{equation}
In particular, $m=1$ if and only if $p\in\{1,2,3,4,6\}$; $m=2$ if
and only if $p\in\{5,8,10,12\}$; $m=3$ if and only if
$p\in\{7,9,14,18\}$; and $m=(p-1)/2$ when $p>2$ is prime.
\end{proposition}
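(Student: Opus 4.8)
The plan is to compute the resultant directly as a product over the roots of $\Phi_p$, to recognize the resulting polynomial in $v$ as a perfect square, and to identify its square root with (a rescaling of) the minimal polynomial of $2\cos(2\pi/p)$. First I would pin down the object $M_m$. The roots of $\Phi_p$ are the primitive $p$-th roots of unity $\zeta=\mathrm{e}^{2n\pi\mathrm i/p}$ with $(n,p)=1$, and for each of them $\zeta+\zeta^{-1}=2\cos(2n\pi/p)$ is a real algebraic integer. Since $\zeta^{-1}=\mathrm{e}^{2(p-n)\pi\mathrm i/p}$ is again a primitive root and $n\equiv p-n$ is impossible for $(n,p)=1$ when $p\ge 3$, the $\phi(p)$ roots split into exactly $\phi(p)/2$ conjugate pairs $\{\zeta,\zeta^{-1}\}$, yielding exactly $m=\phi(p)/2$ distinct values $\cos(2n\pi/p)$. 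By Galois theory these are precisely the conjugates of $\cos(2\pi/p)$ over $\Q$: they form the orbit under $\mathrm{Gal}(\Q(\zeta_p)/\Q)\cong(\Z/p)^*$ descended to the maximal real subfield $\Q(\cos(2\pi/p))$, whose degree over $\Q$ is $\phi(p)/2$. Hence the monic minimal polynomial $\Psi_p$ of $2\cos(2\pi/p)$ has integer coefficients and degree $m$, and I would set $M_m(v)=\Psi_p(2v)$: it has rational coefficients, degree $m$, leading coefficient $2^m$, and vanishes at every $\cos(2n\pi/p)$ with $(p,n)=1$.

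Next I would prove the identity \eqref{eq:mm}. Because $\Phi_p$ is monic, $\operatorname{Res}_x(\Phi_p(x),x^2-2xv+1)=\prod_{\zeta}(\zeta^2-2\zeta v+1)$, the product ranging over all roots $\zeta$ of $\Phi_p$. The key factorization is $\zeta^2-2\zeta v+1=\zeta(\zeta+\zeta^{-1}-2v)=-2\zeta\bigl(v-\cos(2n\pi/p)\bigr)$, so the product becomes $(-2)^{\phi(p)}\bigl(\prod_\zeta\zeta\bigr)\prod_\zeta\bigl(v-\cos(2n\pi/p)\bigr)$. Applying Vieta to the monic $\Phi_p$ gives $\prod_\zeta\zeta=(-1)^{\phi(p)}\Phi_p(0)=(-1)^{\phi(p)}$, using the standard fact $\Phi_p(0)=1$ for $p\ge 2$; thus the scalar prefactor collapses to $(-2)^{\phi(p)}(-1)^{\phi(p)}=2^{\phi(p)}$. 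Grouping each conjugate pair then shows $\prod_\zeta\bigl(v-\cos(2n\pi/p)\bigr)=\bigl(\prod_{\mathrm{distinct}}(v-c)\bigr)^2$, and since $2^{\phi(p)}=(2^m)^2$ the whole expression equals $\bigl(2^m\prod_{\mathrm{distinct}}(v-c)\bigr)^2=M_m^2(v)$, as claimed.

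The remaining assertions are a routine determination of the integer solutions of $\phi(p)\in\{2,4,6\}$ (producing $m\in\{1,2,3\}$) together with $\phi(p)=p-1$ for $p$ prime. I expect the main obstacle to lie not in the resultant computation, which is mechanical, but in the two structural inputs that make it work: the classical field-theoretic fact that $\cos(2\pi/p)$ has exactly $\phi(p)/2$ conjugates over $\Q$ (so that $M_m$ has the stated minimal degree with rational coefficients), and the bookkeeping that forces the scalar and the squaring to come out exactly, namely $\Phi_p(0)=1$ and the absence of a self-paired root for $p\ge 3$. These same points show why $p=1,2$ are genuinely degenerate: there $\zeta=\zeta^{-1}=\pm1$, the cosines do not pair up, and the resultant $\mp 2(v\mp 1)$ is not a perfect square, so \eqref{eq:mm} is really an identity for $p\ge 3$ while $p=1,2$ enter the classification only through the convention $m=1$.
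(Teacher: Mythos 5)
Your argument is correct, and it is in fact more complete than what the paper itself offers: the paper does not prove Proposition~\ref{pr:trig} but defers it to \cite{BDeA,WZ}, adding only two comments on \eqref{eq:mm} --- that each $v=\cos(2n\pi/p)$ is a root of the resultant because $\Phi_p(x)$ and $x^2-2xv+1$ share the root ${\rm e}^{2n\pi{\rm i}/p}$, and that pairing $x$ with $\bar x$ makes every root double. Your proof fleshes out exactly this pairing idea into a self-contained derivation: expanding the resultant as $\prod_\zeta(\zeta^2-2\zeta v+1)$ over the primitive $p$-th roots (legitimate since $\Phi_p$ is monic), factoring $\zeta^2-2\zeta v+1=-2\zeta\big(v-\cos(2n\pi/p)\big)$, and using $\Phi_p(0)=1$ for $p\ge2$ to evaluate the scalar as $2^{\phi(p)}=(2^m)^2$. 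This bookkeeping is precisely what the paper's sketch leaves implicit: without pinning down the leading constant, the identity \eqref{eq:mm} is only determined up to a square factor, and your normalization $M_m(v)=\Psi_p(2v)$, with $\Psi_p$ the monic minimal polynomial of $2\cos(2\pi/p)$ and leading coefficient $2^m$, matches the paper's explicit examples (e.g.\ $M_2(x)=4x^2+2x-1$ for $p=5$). You also supply the minimality claim via the standard Galois-theoretic fact $[\Q(\cos(2\pi/p)):\Q]=\phi(p)/2$ for $p\ge3$, which the paper simply imports from the references. Finally, your remark on $p\in\{1,2\}$ is a genuine, if minor, correction to the statement as written: there $\phi(p)/2=1/2$ is not an integer, the roots $\zeta=\zeta^{-1}=\pm1$ do not pair up, and the resultant equals $-2(v-1)$ resp.\ $2(v+1)$, which is linear in $v$ and hence not a square, so \eqref{eq:mm} is an identity only for $p\ge3$, and $p=1,2$ enter the classification in the proposition only through the convention $m=1$ (the degree of the actual minimal polynomials $x\mp1$), a convention the paper's ``in particular'' list uses tacitly.
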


We only make some comments about \eqref{eq:mm}. Notice that if
$x=\cos (2n\pi/p)+{\rm i}\sin (2n\pi/p)$ then $\Phi_p(x)=0.$
Moreover, if we define $v=\cos (2n\pi/p)$ it holds that $v=(x+\bar
x)/2= (x+1/x)/2,$ or equivalently $(x^2-2xv+1)/x=0.$ Hence the
polynomial $\operatorname{Res}_x\big(\Phi_p(x),x^2-2xv+1 \big)$ has
$v$ as one of its roots.  Moreover, if we start with $\bar x$
instead of $x$ we obtain again the same value $v$ as a root of this
last polynomial, making that all its roots are double.

\smallskip

Notice that, as a consequence of the above result, the only rational
values of $\cos(2\pi/p)$   are
\begin{equation}\label{eq:rat}
\cos(2\pi)=1,\, \cos(2\pi/2)=-1,\, \cos (2\pi/3)=-1/2,\,
\cos(2\pi/4)=0,\, \cos(2\pi/6)=1/2,\end{equation} and the only
values where $\cos(2\pi/p)$ is in $\Q[\sqrt{q}]$ for $q\in\Q,$ but
$\sqrt{q}\not\in\Q$ are
\begin{equation}\label{eq:sqr}
\cos(2\pi/5)=(\sqrt 5-1)/4,\quad \cos(2\pi/10)=(\sqrt 5+1)/4,\quad
\cos (2\pi/12)=\sqrt 3/2.
\end{equation} For instance, the minimal
polynomial for $p=5$ is $M_2(x)=4x^2+2x-1$; the cases where $M_m$
is cubic are, $8x^3+4x^2-4x-1$ when $p=7;$ $8x^3-6x+1$ when $p=9;$
$8x^3-4x^2-4x+1$ when $p=14;$ and $8x^3-6x-1$ when $p=18.$

\smallskip

Next result provides some computable conditions to know whether a
planar rational map with rational coefficients has a meromorphic
first integral. In particular, notice that for many 1-parametric
families of maps it allows to prove that this first integral
can only exist for finitely many values of the parameter, see for
instance Lemma \ref{le:ex1}.

\begin{theorem}\label{te:t0}
Consider the rational map \eqref{ratmap1} with rational
coefficients. Assume that $f$ has a real fixed point $(\widehat
x,\widehat y)\in\R^2$ and that ${\rm D} f (\widehat x,\widehat y)$ has
complex conjugated eigenvalues $\mu ,\bar\mu$ with modulus
different from 1. If the map \eqref{ratmap1} has a meromorphic
first integral, then $\mu /\bar\mu $ is a root of the unity
and there exists a computable polynomial $U_k\in\Q[x],$ of degree
$k\in\N,$ such that
$U_k\big(\operatorname{Re}(\mu /\bar\mu )\big)=0.$ Moreover,
some of the values \begin{equation}\label{eq:con}
\operatorname{Res}_x \big(U_k(x), V_{p}(x)\big),\ \mbox{with}\ p
\ \mbox{such that}\ \deg(V_p(x))\le k,
\end{equation}
must vanish, where $V_{p}$ is the minimal polynomial of
$\operatorname{Re}(\mu /\bar\mu )=\cos(2 n\pi/p),$ with
$(p,n)=1,$ see Proposition \ref{pr:trig}.
\end{theorem}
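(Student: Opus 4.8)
The plan is to combine the resonance constraint from Theorem \ref{t1} with the arithmetic of the trace and determinant of the linearization at the fixed point, and then to use elimination theory to produce the polynomial $U_k$.

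First I would apply Theorem \ref{t1} at $(\widehat x,\widehat y)$. A meromorphic first integral forces a nonzero $\mathbf k=(k_1,k_2)\in\Z^2$ with $\mu^{k_1}\bar\mu^{k_2}=1$. Taking moduli and using $|\mu|=|\bar\mu|\ne1$ gives $|\mu|^{k_1+k_2}=1$, hence $k_1+k_2=0$. Writing $k_2=-k_1$ with $k_1\ne0$ yields $(\mu/\bar\mu)^{k_1}=1$, so $\mu/\bar\mu$ is a root of the unity; this proves the first assertion. Moreover, since $\mu/\bar\mu$ has modulus $1$, it equals $\mathrm e^{2\mathrm i\theta}$ where $\mu=|\mu|\,\mathrm e^{\mathrm i\theta}$, so $\operatorname{Re}(\mu/\bar\mu)=\cos(2\theta)=\cos(2n\pi/p)$ for some $(p,n)=1$.

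Next I would use the key observation that, because $f$ has rational coefficients and $(\widehat x,\widehat y)$ is real, the matrix $\mathrm{D}f(\widehat x,\widehat y)$ is a \emph{real} matrix; its trace $T$ and determinant $D$ are therefore real, with $\mu+\bar\mu=T$ and $\mu\bar\mu=D>0$. A short computation gives
\[
2\operatorname{Re}(\mu/\bar\mu)=\frac{\mu}{\bar\mu}+\frac{\bar\mu}{\mu}=\frac{\mu^2+\bar\mu^2}{\mu\bar\mu}=\frac{T^2-2D}{D},
\]
so that $\operatorname{Re}(\mu/\bar\mu)=T^2/(2D)-1$. The crucial point is that $T$ and $D$ are rational functions, with rational coefficients, of the coordinates of the fixed point, while $(\widehat x,\widehat y)$ is a common (algebraic) solution of the polynomial system $P_1(x,y)-xQ_1(x,y)=0$, $P_2(x,y)-yQ_2(x,y)=0$ obtained from \eqref{ratmap1} by clearing denominators. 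To obtain $U_k$ I would then eliminate: introducing a variable $w$ and clearing denominators in $w=T(x,y)^2/(2D(x,y))-1$ produces a polynomial relation $F(x,y,w)=0$ over $\Q$; eliminating $x$ and $y$ from this relation together with the two fixed-point equations, by iterated resultants (or a Gr\"obner basis), yields a univariate polynomial $U_k\in\Q[w]$ of some degree $k$ vanishing at $w=\operatorname{Re}(\mu/\bar\mu)$, which is the computable polynomial of the statement. Finally, since $\operatorname{Re}(\mu/\bar\mu)=\cos(2n\pi/p)$ is a root of $U_k\in\Q[x]$, its minimal polynomial $V_p$ over $\Q$ (described in Proposition \ref{pr:trig}) divides $U_k$; in particular $\deg V_p\le k$ and $\operatorname{Res}_x(U_k(x),V_p(x))=0$, so at least one of the resultants in \eqref{eq:con} vanishes, as claimed.

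The main obstacle is the elimination step: one must guarantee that the resultant process does not collapse to the zero polynomial and really outputs a nonzero $U_k\in\Q[x]$ having $\operatorname{Re}(\mu/\bar\mu)$ as a root. Because several fixed points (the Galois conjugates of $(\widehat x,\widehat y)$) as well as the spurious factors introduced by clearing denominators all enter the elimination, one has to track which irreducible factor carries the eigenvalue data and discard the extraneous ones coming from $Q_i=0$ or $D=0$. Keeping the whole computation over $\Q$, rather than over the splitting field of the fixed-point system, is precisely what renders the final condition \eqref{eq:con} a finite and effectively checkable test.
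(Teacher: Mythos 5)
Your proposal is correct and follows essentially the same route as the paper: the resonance from Theorem \ref{t1} plus the modulus argument forces $k_2=-k_1$, the identity $\operatorname{Re}(\mu/\bar\mu)=T^2/(2D)-1$ is derived (the paper via $\alpha,\beta$, you via Newton's identities — the same formula), your cleared-denominator relation $F(x,y,w)=0$ is precisely the paper's polynomial $W(x,y,v)=T_1^2D_2-2(1+v)T_2^2D_1$, and the final step via the minimal polynomial $V_p$ and Proposition \ref{pr:trig} is identical. Your closing caveat about the iterated resultants possibly collapsing to the zero polynomial is a legitimate subtlety, but the paper's proof treats it the same way you do, by appealing to the standard elimination theory in \cite{stu} without further discussion.
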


\begin{proof} We introduce the polynomials $S_1,S_2,T_1,T_2,D_1,D_2$
associated  to $f,$
\begin{align*}
S_1(x,y)&= P_1(x,y)-xQ_1(x,y),\quad S_2(x,y)=P_2(x,y)-yQ_2(x,y),\nonumber\\
T(x,y)&=\frac{T_1(x,y)}{T_2(x,y)}=\frac{\partial R_1(x,y)}{\partial
x}+\frac{\partial R_2(x,y)}{\partial y},\\
D(x,y)&=\frac{D_1(x,y)}{D_2(x,y)}=\frac{\partial R_1(x,y)}{\partial
x}\frac{\partial R_2(x,y)}{\partial x}-\frac{\partial
R_1(x,y)}{\partial y}\frac{\partial R_2(x,y)}{\partial x}.\nonumber
\end{align*}

Notice that if $(\widehat x,\widehat y)$ is a fixed point of $f$ then
it is a solution of the system $\{S_1(x,y)=0,\, S_2(x,y)=0\},$ and
moreover $Q_1(\widehat x,\widehat y)Q_2(\widehat x,\widehat y)\ne0.$
Observe also that the eigenvalues $\mu =\mu (\widehat
x,\widehat y)$ of ${\rm D} f (\widehat x,\widehat y)$ satisfy
\begin{equation}\label{eq:pc}
P(\mu )= \mu ^2 -T(\widehat x,\widehat y)\mu +D(\widehat
x,\widehat y)=\mu ^2 -T \mu +D=0, \end{equation}
 where, when
there is no confusion, we omit the dependence of $T,D$ and $\mu $
on the fixed point.

With this notation, the condition of having complex eigenvalues
$\mu $ and $\bar\mu$ with modulus different from one
reads simply as $T^2-4D<0$ and $D\ne1,$ because they satisfy \eqref{eq:pc},
$\mu \bar\mu =D,$ and hence $|\mu |^2=D^2.$

By Theorem \ref{t1}, if $f$ has a meromorphic first integral, then there
exists $(0,0)\ne(p,q)\in\Z^2,$ such that $\mu ^p\bar\mu ^q=1.$
Taking norms, this means that $|\mu |^{p+q}=1$. Since by
hypothesis $|\mu |\neq1,$  we must have $q=-p$. Hence,
$|{\mu }/{\bar \mu }|=1$ and  $\mu /\bar\mu $ is a
$p$-root of the unity. Therefore
$\operatorname{Re}(\mu /\bar\mu )=\cos (2n\pi/p)$ for some
$n\in\{0,1,\ldots,p-1\}$ with $n$ and $p$ coprime.

Let us write
\[
\mu =\frac T2+{\rm i}\sqrt{\frac{4D-T^2}4}=\alpha+{\rm i}\beta,
\quad \bar\mu =\frac T2-{\rm i}\sqrt{\frac{4D-T^2}4}=\alpha-{\rm
i}\beta.
\]
Then
\[
\frac{\mu }{\bar\mu }=\frac{\alpha+i\beta}{\alpha-i\beta}=\frac{\alpha^2-\beta^2}{\alpha^2+\beta^2}+{\rm
i}\frac{2\alpha\beta}{\alpha^2+\beta^2}
\]
and, as a consequence,
\[
\operatorname{Re}\left(\frac{\alpha^2-\beta^2}{\alpha^2+\beta^2}\right)=\frac{T^2}{2D}-1=\cos\left({2n\pi}/p\right).
\]
If we name $v=\cos\left({2n\pi}/p\right),$ then the
following system of three equations is satisfied:
\begin{equation}\label{eq:3eq}
\begin{cases}
&S_1(x,y)=0,\quad S_2(x,y)=0,\\
& W(x,y,v)=T_1^2(x,y)D_2(x,y)-2(1+v)T_2^2(x,y)D_1(x,y)=0.
\end{cases}
\end{equation}
As usual, taking successive resultants, as follows
\begin{align*}
T_1(y)&=\operatorname{Res}_x\big(S_1(x,y),S_2(x,y)\big),\quad T_2(y,v)=\operatorname{Res}_x\big(S_1(x,y),W(x,y,v)\big),\\
U_k(v)&=\operatorname{Res}_y\big(T_1(y),T_2(y,v)\big),
\end{align*}
where $k$ is de degree of $U_k,$ we know that $U_k(v)=0$ and
$U_k\in\Q[x],$ as we wanted to prove, see again~\cite{stu}. Hence,
by Proposition \ref{pr:trig},  $v$ has to be a root of some $
V_{p},$ with $ \deg(V_p(x))\le k.$   By the properties of the
resultant the last statement of the theorem follows.
\end{proof}

\begin{remark}\label{rem} Notice that the conditions in Theorem \ref{te:t0}
depend on the value $k$ given in its statement and obtained solving
system \eqref{eq:3eq}. According to the fixed point $(\widehat x,\widehat
y)$ taken into account, some smaller $k$ can be  considered.
 To do this, let
us treat this system in another way. We consider first the following
two polynomials with rational coefficients,
\begin{align*}
T_1(y)&=\operatorname{Res}_x\big(S_1(x,y),S_2(x,y)\big),\quad
T_3(x)=\operatorname{Res}_y\big(S_1(x,y),S_2(x,y)\big).
\end{align*}
Let $V_1$ and $V_3$ be the irreducible factors in $\Q[x]$ of $T_1$ and
$T_3,$ respectively, such that $T_1(\widehat y)=0$ and $T_3(\widehat
x)=0.$ Then we can follow the same procedure that for system
\eqref{eq:3eq}, but starting with the system
\begin{equation}\label{eq:red}
 V_1(y)=0,\quad V_3(x)=0,\quad  W(x,y,v)=0.
\end{equation}
We arrive at a new $U_{k'}\in\Q[x],$ of degree $k'\le k$ and
such that $U_{k'}(v)=0.$ Then the set of conditions \eqref{eq:con}
has to be satisfied only until $k',$ taking $U_{k'}$ instead of
$U_k.$
\end{remark}

\begin{corollary}\label{co:deg1}
Consider the rational map \eqref{ratmap1} with rational
coefficients. Assume that $f$ has a fixed point $(\widehat x,\widehat
y)\in\Q^2$ and ${\rm D} f (\widehat x,\widehat y)$ has complex
conjugated eigenvalues, $\mu ,\bar\mu ,$ solution of
\[
\mu ^2-T(\widehat x,\widehat y)\mu +D(\widehat x,\widehat
y)=0,
\]
and with modulus different of 1. If the map \eqref{ratmap1}
has a meromorphic first integral, then
\[
\frac{T^2(\widehat x,\widehat y)}{D(\widehat x,\widehat
y)}\in\left\{0,1,2,3,4\right\}.\]
\end{corollary}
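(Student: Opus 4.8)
The plan is to bypass the full resultant machinery of Theorem \ref{te:t0} and instead feed the rationality of the fixed point directly into the one key identity produced inside its proof. First I would recall from that proof that, under the present hypotheses, the existence of a meromorphic first integral forces $\mu/\bar\mu$ to be a $p$-root of the unity, and that its real part is
\[
v:=\operatorname{Re}\!\left(\frac{\mu}{\bar\mu}\right)=\frac{T^2(\widehat x,\widehat y)}{2D(\widehat x,\widehat y)}-1=\cos(2n\pi/p),
\]
for some $n$ coprime with $p$, where $T=T(\widehat x,\widehat y)$ and $D=D(\widehat x,\widehat y)$ are the trace and determinant of ${\rm D}f(\widehat x,\widehat y)$. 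Equivalently, $T^2/D=2(v+1)$, so the whole problem reduces to pinning down the possible values of $v$.

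The new ingredient, stronger than the merely real fixed point permitted in Theorem \ref{te:t0}, is the assumption $(\widehat x,\widehat y)\in\Q^2$. Since $f$ has rational coefficients, the rational functions $T$ and $D$ take rational values at $(\widehat x,\widehat y)$, and $D=|\mu|^2>0$ because the eigenvalues are genuinely complex conjugate, so no division by zero occurs. Hence $v=T^2/(2D)-1$ is a rational number. At this point I would invoke the rigidity of rational cosines of rational angles: as recorded in Proposition \ref{pr:trig} and displayed in \eqref{eq:rat}, the only rational values taken by $\cos(2n\pi/p)$ are $\{-1,-1/2,0,1/2,1\}$ (Niven's theorem). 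Since $v$ is simultaneously rational and of this form, it must lie in this set, and substituting each admissible value into $T^2/D=2(v+1)$ yields $T^2/D\in\{0,1,2,3,4\}$, as claimed.

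Honestly there is no serious obstacle here; the corollary is an immediate consequence once one notices that the rational fixed point upgrades the conclusion $v=\cos(2n\pi/p)$ of Theorem \ref{te:t0} into the statement that $v$ is a \emph{rational} cosine of a rational angle, after which Niven's theorem leaves only five possibilities. The only point deserving a word of care is the boundary value $v=1$, which gives $T^2/D=4$: it corresponds to coincident real eigenvalues and is in fact ruled out by the strict inequality $T^2<4D$ that genuinely complex eigenvalues satisfy, so the sharp list is really $\{0,1,2,3\}$. Including $4$ keeps the statement clean and still correct, and I would phrase the proof so as not to rely on excluding it.
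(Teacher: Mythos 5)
Your proposal is correct and takes essentially the same route as the paper: both proofs reduce the corollary to showing that $v=T^{2}(\widehat x,\widehat y)/\bigl(2D(\widehat x,\widehat y)\bigr)-1$ is rational and then invoke the list \eqref{eq:rat} of rational values of $\cos(2n\pi/p)$, your direct evaluation of $T$ and $D$ at the rational fixed point being exactly what the paper's appeal to Remark \ref{rem} amounts to there, since $V_1(y)=y-\widehat y$ and $V_3(x)=x-\widehat x$ are linear, the resultants collapse to evaluation, and $k=1$ in Theorem \ref{te:t0}. Your closing observation that $v=1$, i.e.\ $T^{2}/D=4$, is actually excluded by the strict inequality $T^{2}-4D<0$ (together with $D=|\mu|^{2}>0$) is a correct minor sharpening not made in the paper, and it does not affect the validity of the stated, slightly larger, set.
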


\begin{proof} By using Theorem \ref{te:t0}, Remark \ref{rem} and that $(\widehat x,\widehat y)\in\Q^2$, we
know that system \eqref{eq:red} writes as
\[
V_1(y)=y-\widehat y,\quad  V_3(x)=x-\widehat x,\quad W(x,y,v)=0.
\]
Hence,
\[
v=\frac{T^2(\widehat x,\widehat y)}{2D(\widehat x,\widehat
y)}-1\in\Q
\]
and $k=1$ in Theorem \ref{te:t0}. Therefore, the only possible
values of $v=\cos(2n\pi/p)$ are the ones that are rational. From
\eqref{eq:rat} we get that
\[\frac{T^2(\widehat x,\widehat
y)}{2D(\widehat x,\widehat y)}-1 \in\left\{-1,-\frac 12,0,\frac
12,1\right\}\] and the corollary follows.
\end{proof}

In the next lemma we apply the above corollary to a simple example.

\begin{lemma}\label{le:ex1}
The rational map
\begin{equation*}
f(x,y)=\left(x y, \frac{(a+(2-a) x) y}{1 + x y}\right),
\end{equation*}
with $a\in\Q,a>9/8$, can have   meromorhic first integrals only when
$a\in\{3/2,2,9/4,9/2\}.$
\end{lemma}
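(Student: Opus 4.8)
The plan is to apply Corollary \ref{co:deg1} at a rational fixed point of $f$. First I would locate the fixed points by solving $x=xy$ together with $y=(a+(2-a)x)y/(1+xy)$. The first equation forces $x=0$ or $y=1$; feeding each alternative into the second and using $a\neq 1$ (which holds since $a>9/8$) yields exactly two fixed points, $(0,0)$ and $(1,1)$, both lying in $\Q^2$. Since $R_1(x,y)=xy$ has vanishing gradient at the origin, ${\rm D}f(0,0)$ is singular and $f$ is not a local diffeomorphism there, so the origin is useless for the hypotheses of Theorem \ref{t1}. I would therefore work at $(1,1)$.

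Next I would compute the Jacobian at $(1,1)$. A direct calculation gives
\[
{\rm D}f(1,1)=\begin{pmatrix} 1 & 1 \\ (1-a)/2 & 1/2 \end{pmatrix},
\]
so that $T=T(1,1)=3/2$ and $D=D(1,1)=a/2$, and the characteristic polynomial is $\mu^2-\tfrac32\mu+\tfrac a2$. Its discriminant is $T^2-4D=(9-8a)/4$, which is negative precisely when $a>9/8$; thus the standing hypothesis guarantees that the eigenvalues $\mu,\bar\mu$ are genuinely complex conjugate. Their common modulus is $|\mu|=\sqrt{D}=\sqrt{a/2}$, which equals $1$ exactly when $a=2$.

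For $a\neq 2$ the eigenvalues have modulus different from $1$, so Corollary \ref{co:deg1} applies and shows that a meromorphic first integral can exist only if
\[
\frac{T^2}{D}=\frac{9/4}{a/2}=\frac{9}{2a}\in\{0,1,2,3,4\}.
\]
The value $0$ is impossible, while $9/(2a)=1,2,3,4$ give $a=9/2,\,9/4,\,3/2,\,9/8$ respectively. Since the constraint $a>9/8$ is strict, the candidate $a=9/8$ is discarded, leaving $a\in\{3/2,9/4,9/2\}$ among the values with $a\neq 2$.

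It then remains to account for $a=2$, and this is the one step demanding care. There $|\mu|=1$, so the hypotheses of Corollary \ref{co:deg1} (and of Theorem \ref{te:t0}) fail and the argument above cannot exclude this value; hence $a=2$ must be retained as a candidate. Collecting the two cases yields the claimed list $a\in\{3/2,2,9/4,9/2\}$. The subtle point is precisely this separate treatment: the resonance $|\mu|=1$ is exactly the situation the corollary does not cover, which is why $a=2$ appears in the conclusion although it is never actually ruled out, whereas every other value of $a$ is eliminated.
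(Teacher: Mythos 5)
Your proof is correct and takes essentially the same approach as the paper: both work at the fixed point $(1,1)$, compute $T=3/2$ and $D=a/2$, and apply Corollary \ref{co:deg1} to force $T^2/D=9/(2a)\in\{0,1,2,3,4\}$ when $a\ne2$, with $a=9/8$ discarded by the strict hypothesis $a>9/8$ and $a=2$ retained because the modulus-one eigenvalues fall outside the corollary's hypotheses. If anything, your write-up is more explicit than the paper's, which leaves the exclusion of $a=9/8$, the special status of $a=2$, and the irrelevance of the degenerate fixed point $(0,0)$ implicit.
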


\begin{proof}
It has a fixed point at $(1,1)$; the characteristic polynomial of
${\rm D} f (1,1)$ is $p(\mu )=\mu ^2-3/2\mu +a/2$ and their
eigenvalues  $(3\pm {\rm i} \sqrt{8a-9})/4$ are complex. Moreover,
when $a\ne2$ they have modulus different from 1. Notice that
${T^2(\widehat x,\widehat y)}/{D(\widehat x,\widehat y)}=9/(2a).$
Hence, by Corollary \ref{te:t0}, when $a\ne2,$ the conditions for
existence of meromorphic first integral are
\[
\frac9{2a}\in\{0,1,2,3,4\},
\]
giving the result of the statement.
\end{proof}

Although we have not found any meromorphic first integral for the
four possible cases given in the statement it is worth to mention
that when $a=1$ then $f$ has the meromorphic (polynomial) first
integral $H(x,y)=y(1+x),$ see also \cite{SS}. In fact, let us see
which conditions for existence of meromorphic first integral are
consequence of Theorem \ref{t1} when  $a<9/8.$ In this case, let
$r\in\R$ be the non-negative solution of $r^2=9-8a$. Then the
eigenvalues at $(\widehat x,\widehat y)$ are $\mu _1=(3-r)/4$ and
$\mu _2=(3+r)/4$. Hence, the condition for the existence of a
meromorphic first integral is that $\mu _1^n\mu _2^m=1,$ for
some non-zero $(n,m)\in\Z^2.$ Equivalently, the only cases that
might have a meromorphic first integral  are either $r=1$ or
\[
{\log\Big(\frac{3-r}4\Big)}\Big/{\log\Big(\frac{3+r}4\Big)}\in\Q.
\]
Notice that $r=1$ precisely corresponds to the integrable case given
above $a=1.$

\bigskip

Next result is similar to Corollary \ref{co:deg1}.  The only
difference is that each one of the coordinates of the fixed point is a
zero  of a quadratic polynomial with integer coefficients. Its proof is
essentially the same, but instead of using  \eqref{eq:rat} we will
use \eqref{eq:sqr} and, consequently, that the only values of
$\cos(2n\pi/p),$ that are solutions of a  quadratic polynomial in
$\Q[x],$ irreducible in $\Q[x],$ are
\begin{equation*}
\frac{-1\pm\sqrt 5}4,\pm\frac{\sqrt 2}2,\frac{1\pm\sqrt
5}4,\pm\frac{\sqrt 3}2.
\end{equation*}
We skip the details of the proof.

\begin{corollary}\label{co:deg2}
Consider the rational map \eqref{ratmap1} with rational
coefficients. Assume that $f$ has a fixed point $(\widehat x,\widehat
y)$ with both coordinates in $\Q[\sqrt s]^2\setminus\Q^2$, $s\in\Q$,
and ${\rm D} f (\widehat x,\widehat y)$ has complex conjugated
eigenvalues $\mu ,\bar\mu$ solution of
\[
\mu ^2-T(\widehat x,\widehat y)\mu +D(\widehat x,\widehat
y)=0,
\]
and with modulus different from 1. If the map \eqref{ratmap1}
has a meromorphic first integral, then
\[
\frac{T^2(\widehat x,\widehat y)}{D(\widehat x,\widehat
y)}\in\left\{\frac{3 \pm \sqrt 5}2,  2 \pm \sqrt 2,\frac{5 \pm \sqrt
5}2, 2 \pm\sqrt 3\right\}.\]
\end{corollary}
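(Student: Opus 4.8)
The plan is to mirror exactly the argument used in Corollary \ref{co:deg1}, changing only the arithmetic nature of the fixed point and hence the shape of the auxiliary polynomials $V_1$ and $V_3$. By Theorem \ref{te:t0} together with Remark \ref{rem}, the existence of a meromorphic first integral forces $\mu/\bar\mu$ to be a root of unity, so that $v=\operatorname{Re}(\mu/\bar\mu)=T^2/(2D)-1=\cos(2n\pi/p)$ for some coprime $(p,n)$. The only place where the hypothesis on the fixed point enters is in determining the degree $k'$ of the polynomial $U_{k'}$ with $U_{k'}(v)=0$ obtained from the reduced system \eqref{eq:red}. So the first step is to pin down, from the new hypothesis $(\widehat x,\widehat y)\in\Q[\sqrt s]^2\setminus\Q^2$, the minimal polynomials $V_1$ and $V_3$ over $\Q$ of the coordinates $\widehat y$ and $\widehat x$.

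Because each coordinate lies in $\Q[\sqrt s]\setminus\Q$, each one is a zero of a quadratic, irreducible polynomial in $\Q[x]$, namely $V_1(y)=(y-\widehat y)(y-\widehat y')$ and $V_3(x)=(x-\widehat x)(x-\widehat x')$, where primes denote the Galois conjugate under $\sqrt s\mapsto-\sqrt s$. Substituting these two quadratics into the third equation $W(x,y,v)=0$ of \eqref{eq:red} and eliminating $x$ and $y$ by successive resultants, as in the proof of Theorem \ref{te:t0}, produces a polynomial $U_{k'}\in\Q[x]$ with $U_{k'}(v)=0$. Since $W$ is rational in $v$ and the elimination now runs over quadratic (rather than linear) factors, one expects $\deg(U_{k'})\le 2$; the key point is that $v$ is expressed through symmetric functions of the Galois‑conjugate coordinates, so $v$ itself is rational or lies in $\Q[\sqrt s]$, and in the latter case satisfies a quadratic over $\Q$. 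Thus $v$ is the root of an irreducible polynomial of degree at most $2$ in $\Q[x]$.

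With $\deg(V_p)\le k'\le 2$ established, Proposition \ref{pr:trig} and the displayed list \eqref{eq:sqr} apply: the only values of $\cos(2n\pi/p)$ that are roots of an irreducible quadratic in $\Q[x]$ are precisely
\[
\frac{-1\pm\sqrt 5}4,\quad \pm\frac{\sqrt 2}2,\quad \frac{1\pm\sqrt 5}4,\quad \pm\frac{\sqrt 3}2,
\]
as recalled just before the statement. Solving $v=T^2/(2D)-1$ for each of these eight admissible values of $v$ yields the eight candidate values of $T^2/D$ listed in the conclusion, completing the argument.

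I expect the only genuine obstacle to be the degree bound $k'\le 2$ in the second step: one must check that the resultant elimination over the two quadratic factors does not inadvertently raise the degree of $U_{k'}$ in $v$, i.e. that the Galois action $\sqrt s\mapsto-\sqrt s$ acts compatibly on $v$ so that $v$ generates at most a quadratic extension of $\Q$. Everything else is the routine resultant bookkeeping already carried out in Theorem \ref{te:t0} and Corollary \ref{co:deg1}, which is why the authors are content to \emph{skip the details of the proof}.
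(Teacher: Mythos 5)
Your proposal is correct and follows essentially the same route as the paper, which explicitly skips the details and says only that the proof is that of Corollary \ref{co:deg1} with \eqref{eq:rat} replaced by \eqref{eq:sqr}: you take $V_1,V_3$ to be the irreducible quadratic minimal polynomials of $\widehat y,\widehat x$ in the reduced system \eqref{eq:red}, and your resolution of the degree issue --- that $v=T^2(\widehat x,\widehat y)/\bigl(2D(\widehat x,\widehat y)\bigr)-1\in\Q[\sqrt s]$, so its minimal polynomial over $\Q$ divides $U_{k'}$ and has degree at most $2$ --- is precisely what makes the sketch rigorous. The one caveat, which your write-up shares with the paper's own statement, is that the degenerate case $v\in\Q$ is tacitly set aside; it would simply return the rational values of Corollary \ref{co:deg1} rather than the eight quadratic-irrational ones listed.
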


\bigskip

Similar results to Corollaries \ref{co:deg1} and \ref{co:deg2} could
be stated assuming that the rational map  $f$ given in
\eqref{ratmap1} has a fixed point $(\widehat x,\widehat y)$ with both
coordinates being a zero of a polynomial of higher degree  and the
same conditions for the eigenvalues of ${\rm D} f (\widehat
x,\widehat y)$ hold. As an example we apply our techniques to prove
the non-meromorphic integrability of a concrete rational map.
Consider
\begin{equation}\label{eq:conc}
f(x,y)=\left(x+y^2-xy,\frac{x^2+xy+1}{x^2-3y+1}\right),
\end{equation}
and its fixed point $(\widehat x,\widehat y)=(s,s)$, where $s\approx 4.836$ is the real root of
$P(x)=x^3-5x^2+x-1.$

It is easy to see that we are under the hypotheses of Theorem
\ref{te:t0}, because $T^2(s,s)-4D(s,s)<0$ and $D(s,s)\ne1.$ For the
sake of shortness, we omit the explicit expressions of $T(x,y)$ and
$D(x,y).$ Moreover, in the notation of this theorem,
\begin{equation*}
 S_1(x,y)=y(y-x),\quad S_2(x,y)=-x^2y+x^2+xy+3y^2-y+1
\end{equation*}
and $W(x,y,v)$ is a polynomial of degree 10 that we do not explicit
either. Then,
\begin{align*}
T_1(y)&=\operatorname{Res}_x\big(S_1(x,y),S_2(x,y)\big)=-y^2P(y),\\
T_3(x)&=\operatorname{Res}_y\big(S_1(x,y),S_2(x,y)\big)=-(x^2+1)P(x).
\end{align*}
Therefore, in the notation of Remark \ref{rem}, $V_1(y)=P(y)$ and
$V_3(x)=P(x).$ Finally,
\[
\operatorname{Res}_x\big(\operatorname{Res}_y(W(x,y,v),V_1(y)),V_3(x)\big)
=U_3(v)Z_6(v),
\]
where $U_3,Z_6\in\Q(x),$  $U_3(x)=5833x^3+16607x^2+15650x+4874$ and
$U_3(\operatorname{Re}(\mu /\bar\mu ))=0,$ with $\mu $ and $\bar\mu
$ the eigenvalues of ${\rm D} f (s,s).$ Since $U_3$ has degree $3,$
to prove that system \eqref{eq:conc} has no meromorphic first
integral it suffices to prove that all the resultants $\operatorname
{Res}_x(U_3(x),Q(x)), Q\in\mathcal{Q},$ do not vanish, where
\begin{align*}
\mathcal{Q}=\{&x,\,x-1,\, x+1,\,2x+1,\, 2x-1,\, 2x^2-1,\, 4x^2-3,\,
4x^2+2x-1,\, 4x^2-2x-1,\\ &8x^3+4x^2-4x-1,\,8x^3-6x+1,\,
8x^3-4x^2-4x+1,\,8x^3-6x-1\}.
\end{align*}
The above set of polynomials corresponds to the only irreducible
ones of degree at most 3 that have a root $\cos(2n\pi/p)$ for some
$n,p\in\Z,$ see the comments after Proposition \ref{pr:trig}. These
thirteen resultants are all different from zero, and the result
follows.

\subsection{Higher dimensional examples}

As in the two dimensional case,  the study of the sequences
generated by $n$-th order difference equations
$$x_{k+n}=g(x_k,x_{k+1},\ldots,x_{k+n-2},x_{k+n-1})$$ can be reduced
to the study of the DDS generated by the  map \[
f(x_1,x_2,\ldots,x_n)=(x_2,x_3,\ldots,x_{n-1},g(x_1,x_2,\ldots,x_n)),\]
from $\C^n$ into itself. Some of the maps that we will consider here have this shape.

\smallskip

We start studying the case of general analytic maps having the maximum possible
number of functionally independent meromorphic first integrals: $n.$

\begin{proposition}\label{pr:rn} Let $f$ be an analytic map from $\C^n$ into itself, with isolated fixed points,
and with $n$ functionally independent meromorphic first integrals.
If ${\bf q}$ is a fixed point of $f,$ then all the eigenvalues~$\mu ,$
of ${\rm D} f({\bf q})$ are $p$-roots of the unity. Moreover, there
is a constructive procedure, described in the proof,
to find a polynomial $P_k\in\C[x]$ of degree $k$ such that, for all
$\mu ,$ $P_k(\mu )=0.$

Additionally, if $f$ is a rational map and all the numerators and
denominators of its components are polynomials in
$\Q[x_1,\ldots,x_n]$, then $P_k\in\Q[x]$ and  $p\le M,$ where $M$ is
the maximum $m\in\N$ such that the degree of the cyclotomic
polynomial $\Phi_m$ is $k.$ Furthermore,
$\operatorname{Res}_x(\Phi_j(x),P_k(x))=0$ for some  cyclotomic
polynomial $\Phi_j$ with $\deg(\Phi_j)\le k.$
\end{proposition}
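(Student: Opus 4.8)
The plan is to derive the root-of-unity conclusion from Theorem~\ref{t1} by a full-rank lattice argument, then to construct $P_k$ by evaluating (in the analytic case) or eliminating (in the rational case) the characteristic polynomial of the Jacobian, and finally to extract the cyclotomic consequences.

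First I would show every eigenvalue is a root of unity. Having $n$ functionally independent meromorphic first integrals, Theorem~\ref{t1} forces the $\mathbb{Z}$-linear span of $\Lambda:=\{\mathbf{k}\in\mathbb{Z}^n:\mu^{\mathbf{k}}=1\}$ to have dimension at least $n$; since $\Lambda\subseteq\mathbb{Z}^n$ its dimension is exactly $n$, so $\Lambda$ is a full-rank sublattice and $\mathbb{Z}^n/\Lambda$ is finite. Putting $N:=|\mathbb{Z}^n/\Lambda|$ gives $N\mathbf{e}_i\in\Lambda$ for every $i$, hence $\mu_i^{N}=\mu^{N\mathbf{e}_i}=1$. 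Thus each $\mu_i$ is an $N$-th root of unity, and all eigenvalues of $\mathrm{D}f(\mathbf{q})$ are $p$-roots of unity with $p=N$.

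Next I would build $P_k$. For a general analytic $f$ I would simply take $P_k(\mu):=\det(\mu I-\mathrm{D}f(\mathbf{q}))$, the characteristic polynomial at the fixed point: it has degree $k\le n$ and vanishes at every eigenvalue by construction, which is the advertised procedure. When $f$ is rational over $\mathbb{Q}$ the coordinates of $\mathbf{q}$ may be algebraic irrationals, so I would instead eliminate them exactly as in the proof of Theorem~\ref{te:t0}: starting from the fixed-point equations $S_i(\mathbf{x})=0$ (numerators of $f_i(\mathbf{x})-x_i$) together with a polynomial numerator of $\det(\mu I-\mathrm{D}f(\mathbf{x}))$, all having coefficients in $\mathbb{Q}$, I would take successive resultants in $x_1,\ldots,x_n$. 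This produces $P_k\in\mathbb{Q}[x]$ whose root set contains every eigenvalue at every fixed point. The step I expect to be the main obstacle is guaranteeing that this chain of resultants does not degenerate to the zero polynomial; this is precisely where the hypothesis of \emph{isolated} fixed points enters, ensuring the fixed-point variety is zero-dimensional so that $P_k$ is a genuine nonzero polynomial of well-defined degree $k$.

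Finally I would apply cyclotomic arithmetic. Fix an eigenvalue $\mu$, a primitive $p$-th root of unity by the first step and a root of $P_k$; its minimal polynomial over $\mathbb{Q}$ is the irreducible $\Phi_p$, so $\Phi_p$ divides $P_k\in\mathbb{Q}[x]$. Consequently $\deg\Phi_j=\phi(p)\le k$ for $\Phi_j:=\Phi_p$, the common root $\mu$ yields $\operatorname{Res}_x(\Phi_j(x),P_k(x))=0$, and the totient inequality $\phi(p)\le k$ bounds $p\le M$ with $M=\max\{m:\deg\Phi_m=k\}$, using that the largest $m$ with $\phi(m)\le k$ is attained at a value with $\phi(m)=k$. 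This completes the plan.
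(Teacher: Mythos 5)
Your proposal is correct and follows essentially the same route as the paper: the paper also deduces from Theorem \ref{t1} that the lattice $\Lambda=\{\mathbf k\in\Z^n:\mu^{\mathbf k}=1\}$ has full rank $n$ (it takes a basis $\mathbf k_1,\ldots,\mathbf k_n$ and sets $p=|\det K|$, which is exactly your index $N=|\Z^n/\Lambda|$), it builds $P_k$ by taking numerators of the fixed-point equations together with the characteristic polynomial $P(\mathbf x,\mu)$ and eliminating $x_1,\ldots,x_n$ by successive resultants, invoking the absence of a continuum of fixed points at precisely the spot you flagged to guarantee $P_k\not\equiv 0$, and it concludes via the minimality of $\Phi_p$ just as you do. Two small points of comparison. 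First, your shortcut for the general analytic case (taking $P_k(\mu)=\det(\mu I-{\rm D}f(\mathbf q))$ at the given fixed point) is actually cleaner than the paper's uniform elimination there, since resultants in the $x_i$ only literally make sense when $f$ is rational; the paper's elimination buys a single $P_k$ covering \emph{all} fixed points at once, which is what matters for the rational case, and there you proceed identically. Second, your closing auxiliary claim --- that the largest $m$ with $\phi(m)\le k$ satisfies $\phi(m)=k$ --- is false in general: for $k=10$ the largest such $m$ is $30$ with $\phi(30)=8$, while $\max\{m:\phi(m)=10\}=22$. But this is not a defect you introduced: the paper's own step ``hence $p\le M$'' makes the identical jump from $\phi(p)\le k$, and both arguments (and the statement itself) are repaired by reading $M$ as $\max\{m\in\N:\deg\Phi_m\le k\}$, after which your deduction and the ``Furthermore'' clause with $\Phi_j=\Phi_p$ go through verbatim.
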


\begin{proof}
    By Theorem \ref{t1}, the dimension of the
    $\mathbb Z$--linear space generated from $\{\mathbf k\in\mathbb
    Z^n:\, \mu^{\mathbf k}=1\}$ is $n.$ Let ${\mathbf k}_i, i=1,2,\ldots,n$ be a basis of this space.
     Then, $\det(K)\ne0,$ where $K$ is the $n\times n$ matrix $K=({\mathbf k}_{i,j})$ and, moreover
     it can be seen that $\mu_\ell^{|\det (K)|}=1,$ for all $\ell=1,2,\ldots,n,$ proving that $p=|\det (K)|\in\N.$

    Set ${\mathbf x}=(x_1,x_2,\ldots,x_n)$. Then, the eigenvalues of ${\rm D} f$ at a fixed point are the
    solutions ${\mathbf x}, \mu$ of the system of $n+1$ equations,
     $f({\mathbf x})={\mathbf x},\, P({\mathbf x},\mu)=0,$ where $ P({\mathbf x},\mu)$
     is the characteristic polynomial of ${\rm D} f({\mathbf x})$ at an arbitrary point ${\mathbf x},$ not necessarily fixed.

\smallskip

Instead of considering them, we take their numerators as a
polynomial system of $n+1$ equations and $n+1$ unknowns,
\begin{equation}\label{eq:sysn}
g({\mathbf x})=\operatorname{Num}(f({\mathbf x})-{\mathbf
x})=0,\quad R({\mathbf x},\mu)=\operatorname{Num}(P({\mathbf
x},\mu))=0,
\end{equation}
where $\operatorname{Num}$ denotes the numerator of a quotient of
polynomials. Doing successive resultants following \cite{stu}, as in
the proof of Theorem \ref{te:t0}, and because $f$ has no continuum
of fixed points, we arrive at a non-zero polynomial $P_k\in\C[x]$,
such that $P_k(\mu)=0$ for all eigenvalues $\mu$ of $Df({\mathbf
x})$ at any ${\mathbf{x}},$ fixed point of $f,$ as we wanted to
prove. Moreover, if $f$ is rational, with numerator and denominator
in $\Q[{\mathbf x}],$ then $P_k\in\Q[x].$

Finally, it is known that given a primitive  $p$-th root of the
unity, $\mu,$ the minimal degree of a polynomial  $S\in\Q[x]$ such
that $S(\mu)=0$ is $\phi(p)=\deg(\Phi_p(x)).$
 Hence, $p\le M,$ as we wanted to prove. Moreover $P_k$ must share some root with one of
 the polynomials $\Phi_j$ with $\phi(j)\le k,$ and as a consequence, for this value of $j,$
 $\operatorname{Res}_x(\Phi_j(x),P_k(x))=0.$
\end{proof}

As an example of application we prove the following lemma.

\begin{lemma} Consider the map
    \begin{equation}\label{eq:todd}
    f(x,y,z)=\left(y,z,\frac{a+y+z}{x}\right),\quad a\in\Q.
    \end{equation}
    If it has 3 functionally independent meromorphic first integrals, then $a\in\{-1,1\}.$
\end{lemma}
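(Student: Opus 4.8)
The plan is to invoke Proposition \ref{pr:rn}: if the map \eqref{eq:todd} admits three functionally independent meromorphic first integrals, then at every fixed point all three eigenvalues of the differential must be roots of the unity, and they are the roots of a polynomial $P_k\in\Q[x]$ produced by the elimination procedure in the proof of that proposition. So first I would locate the fixed points. Imposing $f(x,y,z)=(x,y,z)$ forces $x=y=z=s$ with $s^2-2s-a=0$, giving (generically) two fixed points $(s_\pm,s_\pm,s_\pm)$ with $s_\pm=1\pm\sqrt{1+a}$. Next I would compute the differential at such a point; using $a+2s=s^2$ one gets the characteristic polynomial
\[
\mu^3-\tfrac1s\mu^2-\tfrac1s\mu+1=(\mu+1)\bigl(\mu^2-(1+\tfrac1s)\mu+1\bigr).
\]
The key structural observation is that this cubic is palindromic, so $\mu=-1$ is always an eigenvalue and the remaining two form a reciprocal pair, the roots of $\mu^2-c\mu+1$ with $c=1+1/s$.

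The difficulty is that $s_\pm$, and hence $c_\pm=1+1/s_\pm$, are generally irrational, lying in $\Q(\sqrt{1+a})$, so one cannot apply the rationality restrictions on $\cos(2n\pi/p)$ from Proposition \ref{pr:trig} to each fixed point individually. The device to overcome this is to combine the two Galois–conjugate fixed points. Carrying out the resultant elimination of Proposition \ref{pr:rn} — substitute $y=z=x$ and eliminate $x$ between $x^2-2x-a$ and the numerator $\mu^3x^2-(\mu^2+\mu-2)x+a$ of the characteristic polynomial — yields $P_k(\mu)=a\,(\mu+1)^2Q(\mu)$, where
\[
Q(\mu)=a\mu^4-2(a-1)\mu^3+3(a-1)\mu^2-2(a-1)\mu+a\in\Q[\mu]
\]
is palindromic. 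Its four roots are exactly the two reciprocal pairs, so $Q(\mu)=a\,(\mu^2-c_+\mu+1)(\mu^2-c_-\mu+1)$ with $c_++c_-=2(a-1)/a$ and $c_+c_-=(a-3)/a$ both rational; equivalently $c_+,c_-$ are the (possibly conjugate) roots of a quadratic over $\Q$.

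Now comes the arithmetic heart. By Proposition \ref{pr:rn} every root of $Q$ is a root of the unity, so each $c_\pm=2\cos(2n\pi/p)$ has $[\Q(c_\pm):\Q]=\phi(p)/2\le2$; by Proposition \ref{pr:trig} this forces $p\in\{1,2,3,4,6\}$ (rational $c_\pm\in\{2,-2,-1,0,1\}$) or $p\in\{5,8,10,12\}$ (the quadratic–irrational values). Since $c_++c_-\in\Q$, either both $c_\pm$ are rational or they form one of the finitely many conjugate irrational pairs, so there are only finitely many admissible pairs to test. For each one I would impose consistency of the two expressions $c_++c_-=2(a-1)/a$ and $c_+c_-=(a-3)/a$, i.e. $3(c_++c_-)-2c_+c_-=4$, and solve for $a$. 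Running through the list, the only survivors are the conjugate pair $\{\sqrt2,-\sqrt2\}$ (from $p=8$, giving $Q/a=\mu^4+1=\Phi_8$ and $a=1$) and the rational double value $c_+=c_-=2$ (from $p=1$, where the two fixed points coalesce, giving $Q/a=(\mu-1)^4$ and $a=-1$); every other combination yields a non-integer or inconsistent value of $a$, or the forbidden $a=\infty$. This gives $a\in\{-1,1\}$. The same conclusion can be phrased without Proposition \ref{pr:trig}: the monic rational polynomial $Q(\mu)/a$ having all roots equal to roots of the unity is a product of cyclotomic polynomials, hence lies in $\Z[\mu]$, which forces $1/a\in\Z$, and matching coefficients against the finitely many palindromic degree-four cyclotomic products again leaves only $\Phi_8$ and $(\mu-1)^4$. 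I expect the finite enumeration of admissible eigenvalue pairs to be the only real obstacle; everything else is a direct computation, and one should take care of the degenerate case $a=-1$ where the fixed point is double and this multiplicity is already reflected in $Q/a=(\mu-1)^4$.
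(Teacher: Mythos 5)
Your proposal is correct, and I verified its computations: the resultant of $x^2-2x-a$ and $\mu^3x^2-(\mu^2+\mu-2)x+a$ is indeed $a(\mu+1)^2Q(\mu)$ with $Q$ equal to the paper's $P_4$, the trace identities $c_++c_-=2(a-1)/a$, $c_+c_-=(a-3)/a$ and the consistency relation $3(c_++c_-)-2c_+c_-=4$ are right, and the finite enumeration does leave exactly $\{\pm\sqrt2\}$ (so $a=1$, $Q/a=\Phi_8$) and $c_+=c_-=2$ (so $a=-1$, $Q/a=(\mu-1)^4$), with $c_+=c_-=1$ correctly discarded as ``$a=\infty$''. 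Your route shares the paper's skeleton (Proposition \ref{pr:rn}, fixed points, the factor $(\mu+1)$, resultant elimination to the same quartic) but finishes differently, and the difference is worth noting. The paper computes the nine resultants $\operatorname{Res}_x(P_4,\Phi_p)$ for all $p$ with $\deg\Phi_p\le4$, which only imposes that $P_4$ \emph{shares a root} with some cyclotomic polynomial; this yields the spurious candidates $a\in\{7/9,5/4,3\}$ and requires a second pass factoring $P_4$ at each candidate to check that \emph{all} roots are roots of unity. Your reciprocal-pair factorization $Q/a=(\mu^2-c_+\mu+1)(\mu^2-c_-\mu+1)$, combined with the degree bound $[\Q(c_\pm):\Q]\le2$ and Proposition \ref{pr:trig}, builds the ``all roots'' condition in from the start, so no spurious candidates appear and no verification pass is needed; your cyclotomic-product variant ($Q/a\in\Z[\mu]$, hence $1/a\in\Z$) is likewise valid. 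One small caveat you should patch: your factorization and the formulas $c_\pm=1+1/s_\pm$, $s_+s_-=-a$ silently assume $a\ne0$, and at $a=0$ your elimination degenerates (the resultant $a(\mu+1)^2Q(\mu)$ vanishes identically, because $x=0$ is a common root of the two polynomials for every $\mu$, reflecting the spurious fixed point $s=0$ where $f$ is undefined). The case $a=0$ is disposed of in one line --- the unique genuine fixed point is $s=2$, giving $c=3/2\notin\{2,-2,-1,0,1\}$ --- but as written your argument does not cover it, whereas the paper's reduced system $(\mu^2-\mu+1)x-\mu=0$, being linear in $x$, avoids the degeneracy automatically.
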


\begin{proof}
    We will apply Proposition \ref{pr:rn}. We start constructing a reduced, but equivalent, version of system \eqref{eq:sysn}.
     Notice that the
    fixed points of $f$ are $(x,x,x)$ such that $a+2x=x^2$ and
    \[
{\rm D} f (x,x,x)= \left(
           \begin{array}{ccc}
             0 & 1 & 0\\
             0&0&1\\
               -\frac{a+y+z}{x^2} &\frac 1 x & \frac 1 x\\
           \end{array}
         \right)_{(x,y,z)=(x,x,x)} = \left(
           \begin{array}{ccc}
             0 & 1 & 0\\
             0&0&1\\
               -1 &\frac 1 x & \frac 1 x\\
           \end{array}
         \right).
\]
Hence, the characteristic polynomial at any fixed point is
$(\mu+1)(\mu^2-(1+1/x)\mu+1).$ Therefore, since $\mu=-1$ is a root
of the unity,  system \eqref{eq:sysn} with 4 unknowns can be simply
reduced to a system of 2 equations and 2 unknowns, $x$ and $\mu,$
and a parameter $a$:
\[
-x^2+2x+a=0,\quad (\mu^2-\mu+1)x-\mu=0.
\]
Hence
\[
P_4(\mu)=\operatorname{Res}_x(-x^2+2x+a,(\mu^2-\mu+1)x-\mu)=
a\mu^4-2(a-1)\mu^3+3(a-1)\mu^2-2(a-1)\mu+a=0.
\]
Therefore, we only have to consider the values of $p$ for which  the
cyclotomic polynomial $\Phi_p$ has degree at most 4. They are
$p\in\mathcal{P}=\{1,2,3,4,5,6,8,10,12\}.$ By doing all the
resultants $\operatorname{Res}_x(P_4,\Phi_p(x)),$ with
$p\in\mathcal{P},$ we can discard all the values  $a\in\Q$ but
$a=-1,7/9,5/4,3,1,$ that correspond to $p=1,2,3,4,8,$ respectively.
For instance, we get that
$\operatorname{Res}_x(P_4,\Phi_3(x))=(4a-5)^2$ and
$\operatorname{Res}_x(P_4,\Phi_8(x))=(a-1)^2,$ but
$\operatorname{Res}_x(P_4,\Phi_{10}(x))=(a^2-a-1)^2.$ Finally, the
values  $a\in\{7/9,5/4,3\}$ are also discarded  because, for them,
some of the roots of $P_4$ are not roots of the unity. For instance,
$\left.P_4(\mu)\right|_{a=3}=(\mu^2+\mu+1)(5\mu^2-7\mu+5)/4.$ On the
other hand,
\[
\left.P_4(\mu)\right|_{a=-1}=-(\mu-1)^4 \quad \mbox{and}\quad
\left.P_4(\mu)\right|_{a=1}=\mu^4+1,
\]
and all the roots of both polynomials are roots of the unity.
\end{proof}

It is worth to comment that for $a=1$ it is well known that the map
\eqref{eq:todd} has effectively~3 functionally independent rational
first integrals. Two of them exist for any $a\in\C.$ They are
\begin{align*}
H_1(x,y,z)&=\frac{(x+1)(y+1)(z+1)(a+x+y+z)}{xyz},\\
H_2(x,y,z)&=\frac{(1+x+y)(1+y+z)(a+x+y+z+xz)}{xyz},
\end{align*}
see for instance \cite{CGM5} and its references, and a third one can
be seen in \cite{CGM}  and it is found by using  the tools
introduced in that paper. It exists because this map, for $a=1,$
corresponds to the celebrated 3$^{\rm rd}$ order Todd's difference
equation $x_{n+3}=(1+x_{n+2}+x_n)/x_n$ which is globally 8-periodic,
that is $x_{n+8}=x_n$ for all $n,$ whenever $x_k$ is well defined.
We believe that when $a=-1$ the map has only the above 2
functionally independent meromorphic first integrals, but from our
approach we cannot discard the existence of a third one. In fact, it
is known, even for $a\in\C,$ that the only globally periodic map
corresponds to $a=1,$ see \cite{CGM4,CL}.

\bigskip

There is also a simple case for which the non-existence of
meromorphic first integral can be easily established.

\begin{corollary} Let $f$ be an  analytic diffeomorphism
with a fixed point ${\bf q}\in\C^n$ and assume that the eigenvalues
of ${\rm D} f ({\bf q})$ are $n$ different prime numbers. Then $f$
has no meromorphic first integral.
\end{corollary}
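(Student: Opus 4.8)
The plan is to read this off Theorem~\ref{t1}. By that theorem, the number of functionally independent meromorphic first integrals of $f$ in a neighborhood of ${\bf q}$ is at most the dimension of the $\Z$--linear space $L$ generated by the resonance set $\Lambda=\{\mathbf k\in\Z^n:\, \mu^{\mathbf k}=1\}$, where $\mu=(\mu_1,\ldots,\mu_n)$ are the eigenvalues of ${\rm D} f({\bf q})$. It therefore suffices to prove that, when $\mu_1,\ldots,\mu_n$ are $n$ distinct prime numbers, one has $\Lambda=\{\mathbf 0\}$, so that $\dim L=0$; Theorem~\ref{t1} then yields at most $0$ functionally independent first integrals, i.e. no nonconstant meromorphic first integral.

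Writing $\mu_i=p_i$ with $p_1,\ldots,p_n$ pairwise distinct primes, the resonance condition $\mu^{\mathbf k}=1$ for $\mathbf k=(k_1,\ldots,k_n)\in\Z^n$ becomes the identity $p_1^{k_1}\cdots p_n^{k_n}=1$ in the multiplicative group $\Q^*$. The one substantive step is to invoke unique factorization: since $\Q^*$ is, up to sign, free abelian on the primes, comparing $p_i$--adic valuations on both sides forces $k_i=0$ for every $i$. Concretely, applying the valuation $v_{p_i}$ to $p_1^{k_1}\cdots p_n^{k_n}=1$ gives $k_i=v_{p_i}(1)=0$, because the factors $p_j$ with $j\ne i$ do not contribute to the $p_i$--adic valuation. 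Hence $\mathbf k=\mathbf 0$ is the only resonance.

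Consequently $L=\{\mathbf 0\}$ and $\dim L=0$, so by Theorem~\ref{t1} the map $f$ admits no functionally independent meromorphic first integral near ${\bf q}$, which is the asserted conclusion. I do not expect any real obstacle here: the entire content is that distinct primes generate no multiplicative relations, so the resonance lattice collapses to the origin, and the corollary follows at once from the upper bound in Theorem~\ref{t1}.
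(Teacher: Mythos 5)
Your proposal is correct and follows essentially the same route as the paper: both invoke Theorem~\ref{t1} and reduce to checking that $p_1^{k_1}\cdots p_n^{k_n}=1$ forces $\mathbf k=\mathbf 0$ for distinct primes, so the resonance lattice is trivial. Your explicit use of $p$-adic valuations merely spells out the unique-factorization step that the paper leaves implicit.
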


\begin{proof} By Theorem \ref{t1} we have to calculate the dimension of the
$\mathbb Z$--linear space generated from $\{\mathbf k\in\mathbb
Z^n:\, \mu^{\mathbf k}=1\}$ where here $\mu=(p_1,p_2,\ldots,p_n)$
and the $p_j$ are the $n$ different prime numbers. Hence, from the
condition $p_1^{k_1}p_2^{k_2}\cdots p_n^{k_n}=1$ we get
$k_1=k_2=\cdots=k_n=0$ and this dimension is $0.$ As a consequence,
the map has no meromorphic first integral.
\end{proof}

\bigskip

There are many other rational maps in dimension $n>2$ that admit
meromorphic (indeed rational) first integrals. As a final example we
show one of them, obtained from the paper \cite{sch}, dedicated to
study systems of difference equations with invariants.

\smallskip

The map
\[
f(x,y,z,t)=\left(z,t,{\frac {az+bt+c}{x}},{\frac
{az+bt+c}{y}}\right)
\]
has the first integral
\[
H(x,y,z,t)=\frac{\left( xy+ay+bx \right)
 \left( zt+at+bz \right)  \left(
 ax+az+bt+by+c
 \right)}{xyzt}.
\]
By using our result we could embed  the above map into a large
family, with more parameters, and find necessary conditions among
them for the  cases with one or more meromorphic first integrals.

\subsection{Acknowledgements} This work has received funding from the Ministerio de Ciencia e Innovaci\'{o}n
(MTM2016-77278-P FEDER and PID2019-104658GB-I00 grants) and the
Ag\`{e}ncia de Gesti\'{o} d'Ajuts Universitaris i de Recerca (2017 SGR 1617
grant). The third author is partially supported by NNSF of China
grant numbers 11671254, 11871334 and 12071284, and also by
Innovation Program of Shanghai Municipal Education Commission.

\end{document}